\numberwithin{equation}{section}
\newtheorem{thm}{Theorem}[section]
\newtheorem{lemma}{Lemma}[section]
\newtheorem{prop}{Proposition}[section]
\newtheorem{cor}{Corollary}[section]
{\theorembodyfont{\rmfamily}

\newtheorem{examp}{Example}[section]

\newtheorem{rmk}{Remark}[section]
}
\newcommand{\qed}{\hfill \mbox{\raggedright \rule{.07in}{.1in}}}
\newenvironment{proof}{\vspace{1ex}\noindent{\bf
Proof}\hspace{0.5em}}{\hfill\qed\vspace{1ex}}
\newenvironment{pfof}[1]{\vspace{1ex}\noindent{\bf Proof of
#1}\hspace{0.5em}}{\hfill\qed\vspace{1ex}}
\newcommand{\R}{{\mathbb R}}
\newcommand{\Z}{{\mathbb Z}}
\newcommand{\D}{{\mathbb D}}
 \newcommand{\supp}{\operatorname{supp}}
\newcommand{\BIG}{\displaystyle}
\title{Mixing for invertible infinite measure systems}
\author{
Ian Melbourne \thanks{Mathematics Institute, University of Warwick, Coventry, CV4 7AL, UK}
}
 \date{24 November 2012; Revised 18 April 2014}
\begin{document}

\maketitle

 \begin{abstract}
In a recent paper, Melbourne and Terhesiu
[{Operator renewal theory and mixing rates for
  dynamical systems with infinite measure}, \emph{Invent.\ Math.\/} \textbf{189}
  (2012), 61--110] obtained results on mixing and mixing rates for a large
class of noninvertible maps preserving an infinite ergodic invariant measure.

Here, we are concerned with extending these results to the invertible setting.
Mixing is established for a large class of infinite measure invertible maps.
Assuming additional structure, in particular exponential contraction along stable manifolds, it is possible to obtain good results on mixing rates
and higher order asymptotics.
 \end{abstract}

\section{Introduction}

There is a well-developed theory of mixing and rates of mixing (decay of correlations) for large classes of dynamical systems with finite ergodic invariant measure.
Uniformly hyperbolic (Axiom~A) maps are topologically mixing up to a finite cycle~\cite{Smale67}, and topological mixing is equivalent to mixing for reasonable measures.    Then it is natural to restrict to the mixing case and focus on mixing rates.
The standard approach of Sinai, Bowen and Ruelle~\cite{Bowen75,Ruelle78,Sinai72} is to pass via a Markov partition to symbolic dynamics.
Quotienting out the stable directions leads to a uniformly expanding system
and the transfer operator $L$ restricted to H\"older functions contracts exponentially quickly to the invariant density.
This leads easily to exponential decay of correlations for the uniformly expanding
quotient system.
An elementary approximation argument then extends this result to
the original system.

A similar approach holds in the nonuniformly hyperbolic case for
systems modelled by a Young tower with exponential tails~\cite{Young98}.
(This incorporates classical examples such as dispersing billiards,
 H\'enon-like attractors, and Lorenz-like maps.)
The approach also extends to systems modelled by Young towers
with subexponential decay of correlations~\cite{Young99}.    In fact, there
was an oversight in the literature where many authors assumed that the approximation argument for passing from noninvertible to invertible had been checked, though Young's paper~\cite{Young99} was entirely in the noninvertible setting.
This was resolved recently by Gou\"ezel~\cite{GouezelPC} based on
ideas in~\cite{ChazottesGouezel12}; an exposition can be found in~\cite[Appendix~B]{MTapp2}.

For infinite measure systems, results on mixing and rates of mixing were recently obtained by
Melbourne and Terhesiu~\cite{MT12} in the noninvertible context.
A natural question is to extend these results to the invertible case via the above
approximation argument.   It turns out that this is significantly more complicated than in the finite measure case, for reasons that will become transparent shortly.  In this paper, we resolve the question of mixing in the invertible setting.  Also, we extend results on rates of mixing~\cite{MT12, Terhesiu-app, Terhesiusub} from the noninvertible case, but these extensions are satisfactory only when exponential (or very rapid) contraction is assumed along stable manifolds.  

We now discuss briefly the setting for the results in~\cite{MT12} and in this paper, and explain why the approximation step to pass from noninvertible to invertible is more difficult than in the finite measure case.
We will denote invertible maps by $f:M\to M$ and the corresponding
quotient maps by $\bar f:\bar X\to\bar X$.  The corresponding ergodic invariant measures are denoted $\mu$ and $\bar\mu$.

Let $L$ denote the (normalised) transfer operator for $\bar f$
(so $\int Lv\,w\,d\bar\mu=\int v\,w\circ \bar f\,d\bar\mu$ for $v\in L^1(\bar X)$,
$w\in L^\infty(\bar X)$).
In the finite measure case, we would expect that $L^nv\to\int v\,d\bar\mu$
and would study the rate of this convergence, which translates into the rate of decay of correlations.  However, in the infinite case $|L^nv|_1\to0$ for all
$v\in L^1$, so the aim is to find normalising constants $a_n$ such that
$a_nL^nv\to \int v\,d\bar\mu$ (in some sense) for suitable observables $v$.

An important ingredient for studying infinite measure systems, crucial 
also in~\cite{MT12}, is the existence of a ``good'' inducing set $\bar Y\subset\bar X$ of finite nonzero measure.
Define the {\em first return time} $\varphi:\bar Y\to \Z^+$  and the {\rm first return 
map} $\bar F=\bar f^\varphi:\bar Y\to\bar Y$ by setting
\begin{align} \label{eq-defn}
\varphi(y)=\inf\{n\ge1:\bar f^ny\in\bar Y\}, \quad \bar F(y)=\bar f^{\varphi(y)}(y).
\end{align}
It is assumed that $\bar f$ is conservative so that $\varphi$ is defined almost everywhere in $Y$, but that $\varphi$ is nonintegrable.

We require further that $\bar F:\bar Y\to\bar Y$ is uniformly expanding (in a sense that will be made precise later)
and that the tails of 
the first return time are regularly varying:
\begin{align} \label{eq-varphi}
\bar\mu(y\in \bar Y:\varphi(y)>n)=\ell(n)n^{-\beta},
\end{align}
where $\beta\in(0,1]$ and $\ell$ is a slowly varying function\footnote{A measurable function $\ell:(0,\infty)\to(0,\infty)$ is {\em slowly varying} if $\lim_{x\to\infty}\ell(\lambda x)/\ell(x)=1$ for all $\lambda>0$.  A {\em regularly varying} function is one of the form $\ell(x)x^q$ where $\ell(x)$ is slowly varying}.

Set $d_\beta= \frac{1}{\pi}\sin\beta\pi$ and define
\begin{align} \label{eq-an}
a_n = \begin{cases} d_\beta^{-1}\ell(n)n^{1-\beta}, & \beta\in(0,1) \\[.75ex]
\sum_{j<n}\ell(j)j^{-1}, & \beta=1 \end{cases}.
\end{align} 
Then for $\beta\in(\frac12,1]$, Melbourne and Terhesiu~\cite{MT12} extended results of Garsia and Lamperti~\cite{GarsiaLamperti62} from the scalar probability case to prove that
$\lim_{n\to\infty}|a_n1_{\bar Y}L^nv-\int v\,d\bar\mu|_\infty=0$ for H\"older
observables $v$ supported in $\bar Y$.
It follows that we obtain the mixing result
\[
\lim_{n\to\infty}a_n\int v\,w\circ \bar f^n\,d\bar\mu=\int v\,d\bar\mu\int w\,d\bar\mu,
\]
for all observables $v,w$ supported in $\bar Y$ with $v$ H\"older 
and $w$ integrable.
(The result is stated precisely in Theorem~\ref{thm-MTG}(a) below, and the extension to the invertible setting is Theorem~\ref{thm-main}(a).)

For $\beta\in(0,\frac12]$, it is known that such a result cannot hold without 
further assumptions.   Gou\"ezel~\cite{Gouezel11} used ideas of Doney~\cite{Doney97} to show that the result goes through under the additional 
{\em smooth tails} condition 
\begin{align} \label{eq-smooth1}
\bar\mu(y\in\bar Y:\varphi(y)=n)\le C\ell(n)n^{-(\beta+1)}.
\end{align}
(See Theorem~\ref{thm-MTG}(b) below, and Theorem~\ref{thm-main}(b) for
the extension to the invertible setting.)

In the first paragraph of the introduction, we mentioned that in the classical uniformly hyperbolic setting, there is an elementary approximation method that enables results on decay of correlations to be passed from a quotient noninvertible system to the underlying invertible one.
There is a single fundamental and transparent reason why the approximation method to pass from noninvertible to invertible maps is not straightforward in the infinite measure setting.
In the finite measure case, mixing and rates of mixing are proved for observables supported on the whole space.  In the infinite case, we work with observables supported in $\bar Y$ and $Y$.   Starting with observables $v,w:M\to\R$ supported in $Y$, approximation leads to
observables $\bar v$, $\bar w$ that are not supported in $\bar Y$.  Moreover, the support of $\bar v$ and $\bar w$ depends on the level of approximation.
This accounts for the delicate nature of some of the arguments in this paper.

\begin{rmk}  It turns out that all the problems above are associated with
approximating the $w$ observable.  If it is assumed that $w$ depends only on future coordinates so that it is necessary only to deal with the $v$ observable, then it is relatively easy to recover mixing and mixing rates from the noninvertible case regardless of rates of contraction along stable manifolds.  See Subsection~\ref{sec-v}.
\end{rmk}

\begin{examp}   \label{ex-weak}
An important class of noninvertible maps amenable to the methods in~\cite{MT12} are one-dimensional Pomeau-Manneville intermittent maps~\cite{PomeauManneville80}.  We focus here on the family of maps $\bar f:[0,1]\to[0,1]$ studied
by Liverani {\em et al.}~\cite{LiveraniSaussolVaienti99}.
For $\gamma>0$, these have the form 
\begin{align} \label{eq-LSV}
\bar f(x)=\begin{cases} x(1+2^\gamma x^\gamma), & x\in[0,\frac12) \\
2x-1, & x\in[\frac12,1] \end{cases},
\end{align}
and there is a unique (up to scaling) absolutely continuous $\bar f$-invariant
measure $\bar\mu$ for each $\gamma$.   The measure is finite if and only if
$\gamma<1$.   Taking $\bar Y=[\frac12,1]$,
conditions~\eqref{eq-varphi} and~\eqref{eq-smooth1}
are satisfied with $\beta=1/\gamma$ and $\ell$ asymptotically constant.
Moreover, these maps satisfy the remaining technical assumption
in~\cite{MT12} and here; namely that the first return map 
$\bar F:\bar Y\to\bar Y$ is a Gibbs-Markov map (see Section~\ref{sec-NUE} 
below).

The analogous class of invertible maps are studied in
Hu and Young~\cite{HuYoung95,Hu00}. These ``almost Anosov'' diffeomorphisms are uniformly hyperbolic except at one fixed point.  It is reasonable to expect that they often satisfy the required technical assumptions (it is easy to construct simplified  examples of intermittent diffeomorphisms which do so, see Section~\ref{sec-ex}) in which case our results on mixing apply.  Moreover, the examples in~\cite{HuYoung95} have one neutral expanding direction and one strictly contracting direction so that there is exponential contraction along stable manifolds, implying mixing rates similar to those in the noninvertible setting~\cite{MT12}.   However, Hu~\cite{Hu00} considers almost Anosov diffeomorphisms where both the contracting and expanding directions are neutral, for which our methods would
yield results on mixing but with poor mixing rates.
In independent work, Liverani and Terhesiu~\cite{LiveraniTerhesiusub} have developed a different technique which often yields optimal mixing rates in such situations, and moreover avoids assumptions of a Markov nature, but which is currently restricted to the case where there is a global smooth stable foliation.  See Remark~\ref{rmk-LT} for a more detailed comparison of the work presented here and in~\cite{LiveraniTerhesiusub}.
\end{examp}

The remainder of the paper is organised as follows.
In Section~\ref{sec-mixing}, we recall the results on mixing of~\cite{Gouezel11,MT12} from
the noninvertible setting, and state the corresponding result, Theorem~\ref{thm-main}, in the invertible setting.
In Section~\ref{sec-tower}, we collect some standard techniques related to Young towers.    
In Section~\ref{sec-key}, we prove a key estimate.
Sections~\ref{sec-a} and~\ref{sec-b} contain the proof of
Theorem~\ref{thm-main}.  In  Section~\ref{sec-rates}, we discuss 
rates of mixing.
Finally,  in Section~\ref{sec-ex}, we describe examples to which our results apply.

\paragraph{Notation}
We write
$a_n\ll b_n$ as $n\to\infty$ if there is a constant
$C>0$ such that $a_n\le Cb_n$ for all $n\ge1$.

\section{Statement of results on mixing}
\label{sec-mixing}

In this section, we describe the results of~\cite{Gouezel11,MT12} about mixing for nonuniformly expanding maps, and our main results about mixing for nonuniformly
hyperbolic diffeomorphisms.

\subsection{Mixing for nonuniformly expanding maps}
\label{sec-NUE}

The results of~\cite{Gouezel11,MT12} apply in particular
to systems with first return 
maps that are Gibbs-Markov (uniformly expanding plus good distortion).    This includes parabolic rational
maps of the complex plane (Aaronson {\em et al}~\cite{ADU93}) and Thaler's class of interval
maps with indifferent fixed points~\cite{Thaler95}.

We recall the key definitions~\cite{Aaronson,ADU93}.
Let $(\bar X,\bar\mu)$ be a Lebesgue space with 
countable measurable partition $\alpha_{\bar X}$.
Let $\bar f:\bar X\to \bar X$ be an ergodic, conservative, measure-preserving, Markov map transforming
each partition element bijectively onto a union of partition elements.
Recall that $\bar f$ is {\em topologically mixing} if for
all $a,b\in\alpha_{\bar X}$ there exists $N\ge1$ such that $b\subset \bar f^na$ for all $n\ge N$.

Let $\bar Y$ be a union of partition elements with $\bar\mu(\bar Y)\in(0,\infty)$.
Define the first return time $\varphi:\bar Y\to\R$ and first return map
$\bar F=\bar f^\varphi:\bar Y\to \bar Y$ as in~\eqref{eq-defn}.  Let $\alpha$ be the partition of $\bar Y$ consisting
of nonempty cylinders of the form $a\cap(\bigcap_{j=1}^{n-1} T^{-j}\xi_j)\cap T^{-n}\bar Y$ where $a,\xi_j\in\alpha_{\bar X}$, and $a\subset \bar Y$, $\xi_j\subset \bar X\setminus \bar Y$.
Fix $\theta\in(0,1)$ and define $d_\theta(x,y)=\theta^{s(x,y)}$
where the {\em separation time} $s(x,y)$ is the greatest integer $n\ge0$
such that $\bar F^nx$ and $\bar F^ny$ lie in the same partition element in $\alpha$.
It is assumed that the partition $\alpha$ separates orbits of $\bar F$, so
$s(x,y)$ is finite for all $x\neq y$; then $d_\theta$ is a metric.
Let $F_\theta(\bar Y)$ be the Banach space of $d_\theta$-Lipschitz
functions $v:\bar Y\to\R$ with norm $\|v\|_\theta=|v|_\infty+|v|_\theta$
where $|v|_\theta=\sup_{x\neq y}|v(x)-v(y)|/d_\theta(x,y)$. 

Define the potential function $p=\log\frac{d\bar\mu}{d\bar\mu\circ \bar F}:\bar Y\to\R$.
We require that $p$ is uniformly piecewise Lipschitz: that is,
$p|_a$ is $d_\theta$-Lipschitz for each $a\in\alpha$ and
the Lipschitz constants can be chosen independent of $a$.   
We also require the big images condition $\inf_a \bar\mu(\bar Fa)>0$.
Such a Markov map $\bar F:\bar X\to\bar X$ with partition $\alpha$, with uniformly piecewise Lipschitz potential
and satisfying the big images property, is called a {\em Gibbs-Markov map}.

Throughout we assume that $\bar\mu(\bar X)=\infty$ (in other words $\varphi$ is nonintegrable) and that $\bar\mu$ is normalised so that $\bar\mu(\bar Y)=1$.

\begin{thm}  \label{thm-MTG}
Let $\bar f:\bar X\to\bar X$ be as above with nonintegrable return time $\varphi:\bar Y\to\Z^+$ satisfying~\eqref{eq-varphi} and
Gibbs-Markov return map $\bar F=\bar f^\varphi:\bar Y\to\bar Y$.
Define $a_n$ as in~\eqref{eq-an}.
\begin{itemize}
\item[(a)] {\rm (Melbourne and Terhesiu~\cite{MT12})} If $\beta\in(\frac12,1]$, then
\[
\lim_{n\to\infty}
a_n\int_{\bar X}v\,w\circ \bar f^n\,d\bar\mu=\int_{\bar X} v\,d\bar\mu \int_{\bar X} w\,d\bar\mu,
\]
for all observables $v,w:\bar X\to\R$ supported in $\bar Y$
with $v\in F_\theta(\bar Y)$ and $w\in L^1$.
\item[(b)] {\rm (Gou\"ezel~\cite{Gouezel11})} The same conclusion holds also for $\beta\in(0,\frac12]$
under the additional ``smooth tails'' condition~\eqref{eq-smooth1}.
\end{itemize}
\end{thm}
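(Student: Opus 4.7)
The plan is to apply operator renewal theory to the Gibbs--Markov first return map $\bar F$, and then convert the resulting asymptotics for the iterated transfer operator of $\bar f$ restricted to $\bar Y$ into the mixing statement. First I would reduce both parts of the theorem to the uniform claim
\[
\bigl|a_n 1_{\bar Y} L^n v - 1_{\bar Y}\, \textstyle\int v\,d\bar\mu\bigr|_\infty \to 0 \qquad\text{for all } v \in F_\theta(\bar Y) \text{ supported in } \bar Y,
\]
since pairing such uniform convergence against $w \in L^1$ supported in $\bar Y$ immediately yields the mixing statement via $\int v\cdot w\circ \bar f^n\,d\bar\mu = \int (1_{\bar Y} L^n v)\,w\,d\bar\mu$.

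For the main claim I would set up the following framework on $F_\theta(\bar Y)$. Define $T_n v = 1_{\bar Y} L^n v$ (with $v$ extended by zero outside $\bar Y$) and $R_n v = 1_{\bar Y} L^n(1_{\{\varphi = n\}} v)$. A first-return decomposition of orbits starting in $\bar Y$ yields the renewal equation $T_n = \sum_{k=1}^n T_{n-k} R_k$ for $n \ge 1$, together with $T_0 = I$, so in generating-function form $T(z) = (I - R(z))^{-1}$, convergent in operator norm on $\{|z| < 1\}$. The Gibbs--Markov hypothesis makes $R = R(1)$ quasi-compact on $F_\theta(\bar Y)$, with simple leading eigenvalue $1$, spectral projection $Pv = 1_{\bar Y}\int v\,d\bar\mu$, and complementary spectrum in $\{|\zeta| \le \rho\}$ for some $\rho < 1$. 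Analytic perturbation theory then provides a neighbourhood of $z = 1$ in $\{|z| \le 1\}$ on which
\[
R(z) = \lambda(z) P(z) + Q(z),
\]
with $P(z)^2 = P(z)$, $P(z) Q(z) = Q(z) P(z) = 0$, and the spectral radius of $Q(z)$ uniformly bounded below $1$, so that
\[
T(z) = (1 - \lambda(z))^{-1} P(z) + (I - Q(z))^{-1}(I - P(z)).
\]

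The identity $PR(z)P = \bigl(\int z^\varphi\,d\bar\mu\bigr)P$ gives $\lambda(z) = \int z^\varphi\,d\bar\mu$, and writing $\int(1 - z^\varphi)\,d\bar\mu = (1-z)\sum_{n\ge 0} z^n\,\bar\mu(\varphi > n)$, together with \eqref{eq-varphi} and Karamata's theorem, yields $1 - \lambda(z) \sim \Gamma(1-\beta)\,\ell(1/(1-z))\,(1-z)^\beta$ as $z \to 1$ (with the standard logarithmic correction at $\beta = 1$). The second term of $T(z)$ has geometrically decaying Taylor coefficients, so it remains to show that the scalar Taylor coefficients $u_n = [z^n](1-\lambda(z))^{-1}$ satisfy $u_n \sim a_n^{-1}$, and then to transfer this to the operator setting via a convolution estimate using the continuity $P(z)\to P$; the reflection formula $\Gamma(\beta)\Gamma(1-\beta) = \pi/\sin\pi\beta$ makes the prefactor $d_\beta = \pi^{-1}\sin\pi\beta$ appear naturally.

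The main obstacle is the Tauberian passage from the Abelian asymptotic of $1 - \lambda(z)$ to pointwise asymptotics of $u_n$. For $\beta \in (\tfrac12, 1]$ this is an operator analogue of the Garsia--Lamperti renewal theorem, carried out in \cite{MT12}. For $\beta \in (0, \tfrac12]$ the naive Tauberian argument produces only Cesàro information, and one genuinely needs the smooth tails condition \eqref{eq-smooth1}, which supplies the sharp operator bound $\|R_n\| = O(\ell(n)\,n^{-(\beta+1)})$ on the return-time blocks themselves. This bound fuels Gou\"ezel's adaptation of Doney's argument: after extending $R(z)$ continuously to the closed unit disc away from $\{1\}$, one expands the resolvent $(I - R(z))^{-1}$ directly and estimates each term by summation against the smooth tail, recovering $u_n \sim a_n^{-1}$ throughout the range $\beta \in (0, 1]$.
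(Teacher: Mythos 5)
Your proposal is correct and follows exactly the operator renewal route that the paper itself points to: Theorem~\ref{thm-MTG} is stated as a citation (part (a) to Melbourne--Terhesiu \cite{MT12}, part (b) to Gou\"ezel \cite{Gouezel11}), and your sketch --- reduce to $|a_n 1_{\bar Y} L^n v - 1_{\bar Y}\int v\,d\bar\mu|_\infty\to0$, set up the renewal equation $T_n=\sum_{k=1}^n T_{n-k}R_k$ with $T(z)=(I-R(z))^{-1}$, exploit quasi-compactness of the Gibbs--Markov transfer operator, extract $1-\lambda(z)\sim\Gamma(1-\beta)\ell(1/(1-z))(1-z)^\beta$ via Karamata, and pass to coefficient asymptotics via an operator Garsia--Lamperti argument for $\beta>\tfrac12$ and via Doney/Gou\"ezel with the smooth-tails bound $\|R_n\|=O(\ell(n)n^{-(\beta+1)})$ for $\beta\le\tfrac12$ --- is precisely the argument of those references. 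The only minor point glossed over is that the spectral decomposition of $R(z)$ holds near $z=1$, while invertibility of $I-R(z)$ elsewhere on the closed disc is obtained separately from aperiodicity; this is handled in \cite{MT12,Gouezel11} and does not affect the correctness of your outline.
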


\subsection{Mixing for nonuniformly hyperbolic diffeomorphisms}
\label{sec-NUH}

Let $f:M\to M$ be a diffeomorphism (possibly with singularities) defined on a
Riemannian manifold $(M,d)$.   
We assume that $f$ is nonuniformly hyperbolic in the sense of Young~\cite{Young98,Young99}.   The precise definitions are somewhat technical; here we are content to focus on the parts necessary for understanding this paper, referring to the original papers for further details.  

As part of this set up, there is an $f$-invariant ergodic conservative measure
$\mu$.  Let $Y\subset M$ with $\mu(Y)\in(0,\infty)$ and rescale so that $\mu(Y)=1$.
Define the first return time function
$\varphi:Y\to\Z^+$ given by $\varphi(y)=\inf\{n\ge1:f^ny\in Y\}$.
(By conservativity, $\varphi$ is finite almost everywhere on $Y$ and we may suppose without loss that $\varphi$ is finite on the whole of $Y$.)
Also, define the first return map $F=f^\varphi:Y\to Y$.
(In~\cite{Young98,Young99}, it is not required that $\varphi$ is the first return to $Y$, but this is a crucial ingredient in~\cite{MT12} and in this paper.)

Let $\{Z_j\}$ denote a measurable partition of $Y$ such that $\varphi$ is constant on partition elements.
Let $s$ denote the {\em separation time} with respect to the map $F:Y\to Y$.  
That is, if
$y,y'\in Y$, then $s(y,y')$ is the least integer $n\ge0$ such that $F^n y$, $F^ny'$ lie in distinct partition elements of $Y$.    

Let
$\{W^s\}$ and $\{W^u\}$ denote two measurable partitions of $Y$.
If $y\in Y$, the elements containing $y$ are labelled $W^s(y)$ and $W^u(y)$.

\begin{itemize}
\item[(P1)] 
$F(W^s(y))\subset W^s(Fy)$ for all $y\in Y$.
\item[(P2)]
There exist constants $C\ge1$, $\gamma_0\in(0,1)$ such that
\begin{itemize}
\item[(i)]  If $y'\in W^s(y)$, then $d(F^ny,F^ny')\le C\gamma_0^n$ for all $n\ge0$. 
\item[(ii)]  If $y'\in W^u(y)$, then $d(F^ny,F^ny')\le C\gamma_0^{s(y,y')-n}$ for all $0\le n\le s(y,y')$.
\item[(iii)] If $y,y'\in Y$, then $d(f^jy,f^jy')\le Cd(Fy,Fy')$
for all $0\le j<\min\{\varphi(y),\varphi(y')\}$.
\end{itemize}
\end{itemize}
Hypotheses (P1) and (P2) imply that $F:Y\to Y$ is uniformly hyperbolic with
stable and unstable disks $W^s$ and $W^u$.
We assume a local product structure, namely that each $W^s$ intersects each $W^u$ in precisely one point.

Let $\bar Y=Y/\sim$ where $y\sim y'$ if $y\in W^s(y')$ and
define the partition $\{\bar Z_j\}$ of $\bar Y$.
We obtain a well-defined first return time function  $\varphi:\bar Y\to\Z^+$ and
first return map $\bar F:\bar Y\to\bar Y$.
Define $\bar\pi_*\mu=\bar\mu$ where $\bar\pi:Y\to\bar Y$ is the quotient map.

\begin{itemize}
\item[(P3)]  The map $\bar F:\bar Y\to\bar Y$ and partition $\{\bar Z_j\}$ separate points in $\bar Y$.
(It follows that $d_\theta(y,y')=\theta^{s(y,y')}$ defines
a metric on $\bar Y$ for each $\theta\in(0,1)$.)
\item[(P4)]  $\bar F:\bar Y\to\bar Y$ is a Gibbs-Markov map with respect to the partition $\alpha=\{\bar Z_j\}$ with ergodic invariant probability measure $\bar\mu$.
\end{itemize}

We omit the additional assumptions in Young~\cite{Young98}
that guarantee that $\mu$ is a physical measure for $F:Y\to Y$.   
The results here do
not rely on this property.

Our main result is the following.

\begin{thm}  \label{thm-main}
Assume that $f:M\to M$ is a nonuniformly hyperbolic diffeomorphism satisfying
conditions (P1)--(P4) above.   Assume that the return time function
$\varphi:Y\to\Z^+$ is nonintegrable and has regularly varying tails
as in~\eqref{eq-varphi}.  

Assume further either (a) that $\beta\in(\frac12,1]$,
or (b) that $\beta\in(0,\frac12]$ and $\varphi$ has smooth tails as
in~\eqref{eq-smooth1}.

Then 
\[
\lim_{n\to\infty}
a_n\int_Mv\,w\circ f^n\,d\mu=\int_M v\,d\mu \int_M  w\,d\mu,
\]
for all H\"older observables $v,w:M\to\R$ supported in $Y$.
 \end{thm}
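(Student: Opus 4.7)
The plan is to run the standard stable-leaf approximation argument in the infinite-measure Young tower framework, reducing Theorem~\ref{thm-main} to a mixing statement on the quotient system $\bar f:\bar X\to\bar X$ to which Theorem~\ref{thm-MTG} (after a nontrivial extension) can be applied. The delicate point, as emphasised in the introduction, is that the approximating observables do not remain supported in $\bar Y$ but instead spread across the infinite-measure space $\bar X$.

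I would realise $f:M\to M$ as a Young tower over the first return map $F:Y\to Y$, with stable-leaf projection $\pi:M\to\bar X$ satisfying $\pi\circ f=\bar f\circ\pi$ and $\pi_*\mu=\bar\mu$. Fix $\eta$-H\"older $v,w:M\to\R$ supported in $Y$ and a parameter $k\ge 1$. Contraction along stable leaves (P2)(i) gives that $v\circ f^k$ and $w\circ f^k$ oscillate by at most $C\gamma_0^{k\eta}$ on each stable leaf, so selecting a representative along a transversal produces quotient observables $\bar v_k,\bar w_k:\bar X\to\R$ with
\[
|v\circ f^k-\bar v_k\circ\pi|_\infty\le C\gamma_0^{k\eta},\quad
|w\circ f^k-\bar w_k\circ\pi|_\infty\le C\gamma_0^{k\eta},
\]
and the standard calculation using (P2)(ii)--(iii), to be collected in Section~\ref{sec-tower}, yields $\|\bar v_k\|_\theta$ and $\|\bar w_k\|_\theta$ bounded by a polynomial in $k$ times the H\"older norms of $v,w$.

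Using $f^k$-invariance and the semi-conjugacy,
\begin{align*}
\int_M v\,w\circ f^n\,d\mu
&=\int_M (v\circ f^k)(w\circ f^k)\circ f^n\,d\mu\\
&=\int_{\bar X}\bar v_k\,\bar w_k\circ \bar f^n\,d\bar\mu + O(\gamma_0^{k\eta}),
\end{align*}
where the error absorbs both approximation steps (each bounded by $\gamma_0^{k\eta}$ times an $L^1$-norm). Choosing $k=k(n)$ so that $a_n\gamma_0^{k\eta}\to 0$ — possible because $a_n$ is regularly varying — the theorem reduces to showing
\[
\lim_{n\to\infty}a_n\int_{\bar X}\bar v_k\,\bar w_k\circ\bar f^n\,d\bar\mu
=\int\bar v_k\,d\bar\mu\int\bar w_k\,d\bar\mu,
\]
together with $\int\bar v_k\,d\bar\mu\to\int v\,d\mu$ and similarly for $w$, all with rates uniform enough to accommodate the choice $k=k(n)$.

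The obstacle is that $\supp\bar v_k,\supp\bar w_k\subset \bar f^{-k}(\bar Y)$, which is not contained in $\bar Y$; Theorem~\ref{thm-MTG} does not apply directly, and the key estimate of Section~\ref{sec-key} must replace it. My plan for this estimate is to partition $\bar X=\bigsqcup_{j\ge0}A_j$ by first-entry time into $\bar Y$ ($A_0=\bar Y$ and $A_j=\{\bar y:\bar f^i\bar y\notin\bar Y \text{ for }i<j,\ \bar f^j\bar y\in\bar Y\}$), decompose $\bar w_k$ accordingly, and on each $A_j$ identify $\bar w_k\mathbf{1}_{A_j}$ with a H\"older observable on a subset of $\bar Y$ via $\bar f^j$. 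After the time shift $n\mapsto n-j$, the $j$-th contribution is covered by Theorem~\ref{thm-MTG}(a) in case (a) and, using the smooth tails condition \eqref{eq-smooth1}, by Theorem~\ref{thm-MTG}(b) in case (b); the tail hypothesis \eqref{eq-varphi} controls the contribution from large $j$. The main anticipated difficulty is precisely the uniformity in $k$ of this level-by-level decomposition: one needs Theorem~\ref{thm-MTG} with quantitative estimates, not just a plain limit, so that the errors sum to something compatible with a slowly-growing $k(n)$. Once the key estimate is in place, Theorem~\ref{thm-main} follows by careful assembly of the pieces above, handled separately in Sections~\ref{sec-a} and~\ref{sec-b}.
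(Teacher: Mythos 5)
Your overall framework — realise $f$ as a Young tower over $F\colon Y\to Y$, approximate $v\circ f^k$, $w\circ f^k$ by stable-leaf-constant functions, drop to the quotient, and observe that the approximants are supported in $\bar f^{-k}\bar Y$ rather than $\bar Y$, so that a new key estimate replacing a plain application of Theorem~\ref{thm-MTG} is required — is the right skeleton and matches the paper's architecture. The first-entry decomposition you propose for the key estimate is also the right idea; the paper implements it via the operator convolution $L^n1_Y=\sum_j A_jT_{n-j}$, and the crucial quantity is precisely $|1_{Y_k}A_j1_Y|_1\le\mu(\varphi>j)-\mu(\varphi>j+k)$.

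However, there is a genuine gap in the approximation step, and it is not a minor slip: you assert that (P2)(i) gives
$|v\circ f^k-\bar v_k\circ\pi|_\infty\le C\gamma_0^{k\eta}$, and you then choose $k=k(n)$ so that $a_n\gamma_0^{k\eta}\to 0$. But (P2)(i) controls contraction for the \emph{induced} map $F$, not for $f$. After $k$ steps of $f$, a stable pair has contracted by $\gamma_0^{\psi_k}$ where $\psi_k$ counts visits to $Y$ up to time $k$; on long excursions $\psi_k$ stays small, so the pointwise error is $C\gamma^{\psi_k(\cdot)}$ and its $L^\infty$-norm does not decay at all. What one does have is $|\gamma^{\psi_k}|_1\to0$ by bounded convergence, with no rate. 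Consequently the strategy of coupling $k=k(n)$ via $a_n\gamma^{k}\to 0$ is unavailable in the generality of Theorem~\ref{thm-main}; it would go through only under the exponential-contraction hypothesis (E), which the paper only invokes in Section~\ref{sec-rates} for mixing rates. This $\gamma^{\psi_k}$ versus $\gamma^k$ distinction is exactly what makes the invertible infinite-measure case subtle, and it also explains why one cannot simply bound $\|\bar v_k\|_\theta$ by a polynomial in $k$: Proposition~\ref{prop-tildev}(d) instead gives the uniform bound $\|L^k\bar v_k\|_\theta\le C\|v_0\|_{C^\eta}$ after pushing forward by $L^k$, and Proposition~\ref{prop-Y}(b) is what restores the support to $\bar Y$.

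The paper's way around this is to avoid coupling $k$ to $n$ entirely: it proves a key estimate (Lemma~\ref{lem-key}) of the form $|\int_{\bar\Delta}(a_nL^{n-k}v-\int v)w\,d\mu|\le c_{k,n}\|v\||w|_\infty$ for $v$ supported in $\bar Y$ and $w\in L^\infty(\bar Y_k)$, with $c_{k,n}\to0$ as $n\to\infty$ for each \emph{fixed} $k$. It then bounds the correlation by $c_{k,n}+|\gamma^{\psi_k}|_1$ plus a term in $a_n|\gamma^{\psi_\ell}|_1$ coming from the $v$-approximation, with a \emph{separate} parameter $\ell$. Taking $\ell\to\infty$ first (killing the $a_n|\gamma^{\psi_\ell}|_1$ term, since $a_n$ is fixed), then $n\to\infty$ (killing $c_{k,n}$), then $k\to\infty$ (killing $|\gamma^{\psi_k}|_1$) yields the result. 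The asymmetric treatment of $v$ and $w$ — $v$ is handled via $L^\ell\bar v_\ell$ which regains support in $\bar Y$ with controlled $\|\cdot\|_\theta$-norm, while $w$ is handled by allowing $L^\infty$ test functions supported in $\bar Y_k$ in the key estimate — is essential and is missing from your symmetric $k$-for-both plan. If you repair the approximation estimate to $\gamma^{\psi_k}$, decouple the two approximation parameters, and switch to the fix-$k$-then-limit ordering, your proposal becomes essentially the paper's proof.
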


\begin{rmk}  The results in this paper are restricted to the situation where the first return quotient map $\bar F:\bar Y\to\bar Y$ is Gibbs-Markov.   However, the results in~\cite{MT12}, described in Theorem~\ref{thm-MTG}(a), hold in a much more general functional analytic setting which includes the important class of one-dimensional ``AFN'' maps studied in~\cite{Zweimuller00}.  (These are the non-Markovian analogue of  the maps studied by Thaler~\cite{Thaler95}.)
Results for invertible maps where the quotient map is AFN are obtained in~\cite{LiveraniTerhesiusub}.
\end{rmk}

\section{Towers and approximation of observables}
\label{sec-tower}

In this section, we recall standard material on modelling nonuniformly expanding maps and
nonuniformly hyperbolic diffeomorphisms by tower maps (one-sided and two-sided
respectively), and how to approximate observables to pass from two-sided to one-sided towers.

\subsection{One-sided towers}
\label{sec-tower1}

Let $\bar f:\bar X\to\bar X$ be a nonuniformly expanding map with Gibbs-Markov first return map $\bar F=\bar f^\varphi:\bar Y\to\bar Y$ as in
Subsection~\ref{sec-NUE}.
Define the {\em tower} $\bar \Delta=\{(y,j)\in \bar Y\times\Z:0\le j<\varphi(y)\}$
and the {\em tower map} $\bar f_{\bar\Delta}:\bar \Delta\to\bar \Delta$ given by
$\bar f_{\bar\Delta}(y,j)=(y,j+1)$ for $j\le \varphi(y)-2$
and $\bar f_{\bar\Delta}(y,j)=(\bar Fy,0)$ for $j= \varphi(y)-1$.

The base of the tower $\{(y,0):y\in \bar Y\}$ is naturally identified with
$\bar Y$ and so we may regard $\bar Y$ as a subset of both $\bar X$ and $\bar \Delta$.
Then $\mu_{\bar \Delta}=\bar\mu\times{\rm counting}$ is an $\bar f_{\bar\Delta}$-invariant measure on $\bar \Delta$.

Define the projection $\pi:\bar \Delta\to \bar X$, $\pi(y,j)=\bar f^jy$.
Then $\pi \bar f_{\bar\Delta} = \bar f \pi$ and $\pi_*\mu_{\bar \Delta}=\bar\mu$.
Thus $\bar f_{\bar\Delta}$ is an extension of $\bar f$ with the same first return map
$\bar F:\bar Y\to \bar Y$ and return time function $\varphi:\bar Y\to\Z^+$ as the original map.

\subsection{Two-sided towers}
\label{sec-tower2}

Now let $f:M\to M$ be a nonuniformly hyperbolic diffeomorphism, as in
Subsection~\ref{sec-NUH}, with 
first return map $F=f^\varphi:Y\to Y$ that quotients to
a Gibbs-Markov map $\bar F:\bar Y\to\bar Y$.  Form the quotient tower
$\bar\Delta$ as in Subsection~\ref{sec-tower1},

Similarly, starting from $F:Y\to Y$ and $\varphi:Y\to\Z^+$, we can form a tower $\Delta$
and tower map $f_\Delta:\Delta\to\Delta$ such that $F=f^\varphi:Y\to Y$ is the
first return map  for $f_\Delta$.   Again,
$\mu_\Delta=\mu\times{\rm counting}$ is an
$f_\Delta$-invariant measure on $\Delta$.   Define the semiconjugacy
$\pi:\Delta\to M$, $\pi(y,\ell)=f^\ell y$.  Then we can take
$\mu=\pi_*\mu_\Delta$ to be the measure on $M$ mentioned in Section~\ref{sec-NUH}.

We use the separation time for $F:Y\to Y$ to define a separation time on 
$\Delta$:
define $s((y,\ell),(y',\ell'))=s(y,y')$
if $\ell=\ell'$ and $0$ otherwise.
This drops down to separation times $s$ on $\bar\Delta$ and $\bar Y$.
(The separation time on $\bar Y$ coincides with the one defined in
Subsection~\ref{sec-NUE}.)

For $q=(y,\ell),q'=(y',\ell)\in\Delta$, we write $q'\in W^s(q)$ if $y'\in W^s(y)$ and
$q'\in W^u(q)$ if $y'\in W^u(y)$.
Conditions (P2) translate as follows.
\begin{itemize}
\item[(P2$'$)]
There exist constants $C\ge1$, $\gamma_0\in(0,1)$ such that
for all $q,q'\in \Delta$, $n\ge1$,
\begin{itemize}
\item[(i)]  If $q'\in W^s(q)$, then $d(\pi f_\Delta^nq,\pi f_\Delta^nq')\le C\gamma_0^{\psi_n(q)}$, and
\item[(ii)]  If $q'\in W^u(q)$, then $d(\pi f_\Delta^nq,\pi f_\Delta^nq')\le C\gamma_0^{s(q,q')-\psi_n(q)}$, 
\end{itemize}
\end{itemize}
where $\psi_n(q)=\#\{j=0,\dots,n-1:f^jq\in Y\}$ is the number of visits of $q$ to $Y$ by time $n$.

\begin{prop} \label{prop-W}
$d(\pi f_\Delta^n q,\pi f_\Delta^n q')\le C\gamma_0^{\min\{\psi_n(q),s(q,q')-\psi_n(q)\}}$ for all
$q,q'\in\Delta$, $n\ge1$.
\end{prop}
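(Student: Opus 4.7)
The plan is to prove the bound by exploiting the local product structure on $Y$. Fix $q=(y,\ell)$ and $q'=(y',\ell)\in\Delta$ (if $q,q'$ sit at different tower levels, $s(q,q')=0$, so $\min\{\psi_n(q),s(q,q')-\psi_n(q)\}\le0$ and $C\gamma_0^{\min\{\cdot,\cdot\}}\ge C$, which handles the case by taking $C$ large). By the local product structure, there is a unique point $y^*\in W^s(y)\cap W^u(y')$. Lift this to $q^*=(y^*,\ell)\in\Delta$; then $q^*\in W^s(q)$ and $q^*\in W^u(q')$. The strategy is the triangle inequality
\[
d(\pi f_\Delta^n q,\pi f_\Delta^n q')\le d(\pi f_\Delta^n q,\pi f_\Delta^n q^*) + d(\pi f_\Delta^n q^*,\pi f_\Delta^n q'),
\]
estimate each term using (P2$'$), and then simplify using two identifications between the data of $q$ and the data of $q^*$.

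The first identification I would establish is $\psi_n(q^*)=\psi_n(q)$ for all $n\ge1$. Since $y^*\in W^s(y)$, and the partition $\{Z_j\}$ of $Y$ (on which $\varphi$ is constant) is a union of stable leaves, we get $\varphi(y)=\varphi(y^*)$. By (P1), $Fy^*\in W^s(Fy)$, and iterating shows $\varphi(F^ky)=\varphi(F^ky^*)$ for every $k$. Since $q$ and $q^*$ start at the same level and climb in lockstep, they return to $Y$ at exactly the same times, hence $\psi_n(q)=\psi_n(q^*)$. The second identification I would prove is $s(q^*,q')\ge s(q,q')$. Because $y^*$ and $y$ always lie in the same partition element of $Y$ under iterates of $F$ (same stable leaf, hence same $Z_j$, by the previous argument applied to the partition rather than to $\varphi$), one has $s(y,y^*)=\infty$, and then the ``ultrametric'' property $s(y^*,y')\ge\min\{s(y^*,y),s(y,y')\}$ (immediate from the definition of separation time) yields $s(q^*,q')\ge s(q,q')$.

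Putting these together, (P2$'$)(i) applied to the pair $q^*\in W^s(q)$ gives $d(\pi f_\Delta^n q,\pi f_\Delta^n q^*)\le C\gamma_0^{\psi_n(q)}$, and (P2$'$)(ii) applied to $q^*\in W^u(q')$ gives $d(\pi f_\Delta^n q^*,\pi f_\Delta^n q')\le C\gamma_0^{s(q^*,q')-\psi_n(q^*)}\le C\gamma_0^{s(q,q')-\psi_n(q)}$. Summing and bounding each exponent from below by $\min\{\psi_n(q),s(q,q')-\psi_n(q)\}$ yields the desired inequality, after absorbing a factor of $2$ into the constant $C$.

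I do not foresee a significant obstacle: the only mildly delicate point is the verification that $\psi_n(q^*)=\psi_n(q)$ and $s(q^*,q')\ge s(q,q')$, both of which reduce to the fact that the $F$-orbit of $y^*$ stays in the same partition element of $Y$ as that of $y$ for all time, a direct consequence of (P1) together with the assumption that $\{Z_j\}$ refines the partition into stable leaves.
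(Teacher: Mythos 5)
Your proof is correct and follows exactly the approach the paper intends: the paper's one-line proof ("immediate from (P2$'$) and the product structure on $Y$, cf.\ [M07, Corollary~5.3]") is precisely the decomposition you carry out, namely locating $y^*\in W^s(y)\cap W^u(y')$ via the local product structure, applying the triangle inequality, and invoking (P2$'$)(i) and (P2$'$)(ii) after noting that $\psi_n(q^*)=\psi_n(q)$ and $s(q^*,q')=s(q,q')$ (both following from the fact that stable leaves are contained in partition elements $Z_j$, which is implicit in the Young-tower framework and needed already for $\bar Y$ and the quotient separation time to be well defined). You have simply made explicit the bookkeeping the paper leaves to the reader.
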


\begin{proof}  This is immediate from conditions (P2$'$) and the product 
structure on $Y$ (cf.~\cite[Corollary~5.3]{M07}).
\end{proof}

Given $\theta\in(0,1)$, we define the symbolic metric $d_\theta$ on
$\bar\Delta$ by setting $d_\theta(q,q')=\theta^{s(q,q')}$.
This restricts to the metric
$d_\theta$ defined on $\bar Y$ in Section~\ref{sec-NUE}.

\subsection{Approximation of observables}
Throughout, we work with H\"older observables $v_0,w_0:M\to\R$ with
fixed H\"older exponent $\eta\in(0,1]$.
Let  $v=v_0\circ\pi:\Delta\to\R$ be the lift of such a $C^\eta$ observable
$v_0:M\to\R$.   For each $k\ge1$, define $v_k:\Delta\to\R$,
\[
v_k(q)=\inf\{v(f_\Delta^kq'):s(q,q')\ge 2\psi_k(q)\}.
\]
We list some standard properties of $v_k$.
Let $L$ denote the transfer operator corresponding to $\bar f_{\bar\Delta}:\bar\Delta\to\bar\Delta$, defined by
$\int_{\bar\Delta}Lv\,w\,d\mu_{\bar\Delta}=
\int_{\bar\Delta}v\,w\circ\bar f_{\bar\Delta}\,d\mu_{\bar\Delta}$ for
$v\in L^1$, $w\in L^\infty$.

\begin{prop} \label{prop-tildev}
The function $v_k$ lies in $L^\infty(\Delta)$ and projects down to a Lipschitz 
observable $\bar v_k:\bar\Delta\to\R$.  Moreover, setting $\gamma=\gamma_0^\eta$
and $\theta=\gamma^{\frac12}$,
\begin{itemize}
\item[(a)] $|\bar v_k|_\infty=|v_k|_\infty\le |v_0|_\infty$.
\item[(b)]  
$|v\circ f_\Delta^k(q)-v_k(q)| \le C\|v_0\|_{C^\eta}\gamma^{\psi_k(q)}$ for $q\in\Delta$.
\item[(c)] $|\gamma^{\psi_k}|_1\to0$ as $k\to\infty$.
\item[(d)]  $\|L^k\bar v_k\|_\theta\le C\|v_0\|_{C^\eta}$.  
\end{itemize}
\end{prop}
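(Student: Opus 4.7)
The plan is to verify parts (a)--(c) as short consequences of the definition together with Proposition~\ref{prop-W}, reserving most effort for the transfer-operator regularity in (d).

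For (a), $v_k(q)$ is an infimum of values of the bounded function $v=v_0\circ\pi$, giving $|v_k|_\infty\le|v_0|_\infty$; it descends to $\bar v_k$ on $\bar\Delta$ because both the constraint set $\{q'':s(q,q'')\ge 2\psi_k(q)\}$ and the value $\psi_k(q)$ are invariant along stable fibers. Stable-manifold points have infinite separation, and the ultrametric $s(x,z)\ge\min\{s(x,y),s(y,z)\}$ then carries the constraint across $\sim$. For (b), any $q'$ with $s(q,q')\ge 2\psi_k(q)$ satisfies $s(q,q')-\psi_k(q)\ge\psi_k(q)$, so Proposition~\ref{prop-W} with $n=k$ yields $d(\pi f_\Delta^k q,\pi f_\Delta^k q')\le C\gamma_0^{\psi_k(q)}$; applying the $C^\eta$-norm of $v_0$ and then infimizing over $q'$ gives the bound. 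For (c), interpreting $|\cdot|_1$ as the $L^1$-norm on the base $Y$ of measure one, conservativity of $f$ forces $\psi_k(y,0)\to\infty$ for $\mu$-a.e.\ $y\in Y$, and dominated convergence with $\gamma^{\psi_k}\le 1$ concludes.

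For (d), expand $L^k\bar v_k(\bar p)=\sum_{\bar q}J_k(\bar q)\bar v_k(\bar q)$ over $\bar q\in\bar f_{\bar\Delta}^{-k}(\bar p)$, where $J_k$ is the inverse Jacobian. For $\bar p,\bar p'\in\bar\Delta$ with $s(\bar p,\bar p')=N$, pair preimages $\bar q\leftrightarrow\bar q'$ along common itineraries; Gibbs--Markov bounded distortion, propagated through the tower, gives $s(\bar q,\bar q')=\psi_k(\bar q)+N$ and $|J_k(\bar q)-J_k(\bar q')|\le C\,J_k(\bar q)\,\theta^N$. Split preimages by whether $\psi_k(\bar q)\le N$ (tame) or $\psi_k(\bar q)>N$ (wild). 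For tame preimages the separation $s(\bar q,\bar q')\ge 2\psi_k(\bar q)$ and the ultrametric forces the two infima defining $\bar v_k$ to be over the same set, so $\bar v_k(\bar q)=\bar v_k(\bar q')$; the distortion term alone then gives a tame contribution $\le C|v_0|_\infty\theta^N$ using $\sum_{\bar q}J_k(\bar q)=1$. Combined with the trivial bound $|L^k\bar v_k|_\infty\le|v_0|_\infty$ (as $L^k$ is an $L^\infty$-contraction), this settles everything except the wild preimages.

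The main obstacle is to bound the wild contribution $|v_0|_\infty\sum_{\psi_k(\bar q)>N}J_k(\bar q)$ by $C|v_0|_\infty\theta^N$ uniformly in $\bar p$ and $k$. I would handle this by re-summing preimages of $\bar p=(\bar y,m)$ through the base map $\bar F$: setting $n=k-m$, a preimage with $\psi_k=J$ is parametrized by $\bar z\in\bar F^{-J}(\bar y)$ satisfying $\varphi_{J-1}(\bar F\bar z)<n\le\varphi_J(\bar z)$, weighted by $J_J(\bar z)$. The resulting base sum can be written as $L_{\bar F}^J$ applied to indicators of cumulative-return-time level sets, and bounded distortion for the Gibbs--Markov transfer operator, together with the choice $\theta=\gamma^{1/2}$ (which provides the correct trade-off between the $\gamma^{\psi_k}$-scale flatness of $\bar v_k$ and the $\theta^N$-decay in the base), supplies the required geometric decay. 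This completes (d).
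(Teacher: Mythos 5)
Your arguments for (a)--(c) are correct and match the paper's brief treatment: well-definedness of $\bar v_k$ follows because $s(q,q')\ge 2\psi_k(q)$ (in particular $s=\infty$ on stable leaves) forces $v_k(q)=v_k(q')$; part~(b) is Proposition~\ref{prop-W} at $n=k$ combined with $s(q,q')-\psi_k(q)\ge\psi_k(q)$; part~(c) is bounded convergence. The paper cites these as standard and, for part~(d), refers the reader entirely to~\cite[Proposition~B.5(c)]{MTapp2}, so there is no in-text argument there to compare against.

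Your proposed argument for (d) has a genuine gap at the wild preimages. The tame/wild split and the distortion estimate are sound, but for wild preimages ($\psi_k(\bar q)>N$) you bound $|\bar v_k(\bar q)|$ and $|\bar v_k(\bar q')|$ crudely by $|v_0|_\infty$ and thereby reduce to showing $\sum_{\psi_k(\bar q)>N}J_k(\bar q)\le C\theta^N$. That estimate is not a consequence of the Gibbs--Markov structure alone: for a single-level tower with $\varphi\equiv1$ one has $\psi_k\equiv k$, so all preimages are wild whenever $N<k$ yet the sum equals~$1$, which is not $O(\theta^N)$. Your sketched re-summation through $\bar F$ does not supply the required geometric decay in $N$. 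The crude bound throws away information that is in fact available: by part~(b) and Proposition~\ref{prop-W}, for lifts $q,q'$ of a matched pair with $s(\bar q,\bar q')=\psi_k(\bar q)+N$,
\begin{align*}
|\bar v_k(\bar q)-\bar v_k(\bar q')|&\le|v_k(q)-v(f_\Delta^kq)|+|v(f_\Delta^kq)-v(f_\Delta^kq')|+|v(f_\Delta^kq')-v_k(q')|\\
&\le C\|v_0\|_{C^\eta}\bigl(\gamma^{\psi_k(q)}+\gamma^{\min\{\psi_k(q),\,N\}}\bigr),
\end{align*}
which in the wild regime $\psi_k(q)>N$ is $\le C\|v_0\|_{C^\eta}\gamma^N=C\|v_0\|_{C^\eta}\theta^{2N}\le C\|v_0\|_{C^\eta}\theta^N$. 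Summing this against $J_k(\bar q')$ and using $\sum_{\bar q'}J_k(\bar q')=1$ controls the oscillation part of $|L^k\bar v_k|_\theta$ by $C\|v_0\|_{C^\eta}\theta^N$, with no need to bound the mass of the wild set; combined with your distortion estimate and the trivial sup-norm bound this gives~(d).
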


\begin{proof}
Again, this is standard (see for example~\cite[Proposition~B.5]{MTapp2}).
If $s(q,q')\ge 2\psi_k(q)$, then $v_k(q)=v_k(q')$.
It follows that $v_k$ is piecewise constant on a measurable partition 
of $\Delta$, and hence is measurable, and that $\bar v_k$ is well-defined.
Part (a) is immediate.   

Recall that $v=v_0\circ\pi$ where $v_0:M\to\R$ is $C^\eta$.
Let $q\in\Delta$.
By Proposition~\ref{prop-W} and the definition of $v_k$,
\begin{align*}
|v\circ f_\Delta^k(q)-v_k(q)| &=|v_0(\pi f_\Delta^kq)-v_0(\pi f_\Delta^kq')|
 \le \|v_0\|_{C^\eta}d(\pi f_\Delta^kq,\pi f_\Delta^kq')^\eta
\\ & \le C^*\gamma^{\min\{\psi_k(q),s(q,q')-\psi_k(q)\}},
\end{align*}
where $q'$ is such that $s(q,q')\ge 2\psi_k(q)$.
In particular, $s(q,q')-\psi_k(q)\ge \psi_k(q)$, so we obtain part~(b).
Part~(c) is immediate by the bounded convergence theorem since
$\psi_k\to\infty$ as $k\to\infty$ almost everywhere.

The proof of part~(d) is a bit longer and we refer to~\cite[Proposition~B.5(c)]{MTapp2}.
\end{proof}

We require some further less standard properties that stem from the special role of $Y$ in infinite ergodic theory.

\begin{prop}   \label{prop-Y}  Let $k\ge1$.
\begin{itemize}
\item[(a)]  If $\supp v\subset Y$, then $\supp v_k\subset f_\Delta^{-k}Y$.
\item[(b)]  If $\supp v\subset Y$, then $\supp L^k\bar v_k\subset\bar Y$.
\end{itemize}
\end{prop}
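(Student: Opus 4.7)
The two assertions are tightly linked: once (a) is in hand, (b) is a short consequence of the duality that defines the transfer operator. I would therefore do the real work in~(a) and then derive~(b).

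For part~(a), the plan is to show that if $f_\Delta^k q\notin Y$, then every $q'$ admissible in the infimum defining $v_k(q)$ also satisfies $f_\Delta^k q'\notin Y$, whence $v(f_\Delta^k q')=0$ because $\supp v\subset Y$ forces $v$ to vanish off the base of $\Delta$. Writing $q=(y,\ell)$ and $q'=(y',\ell)$ (the separation-time condition $s(q,q')\ge 2\psi_k(q)$ forces the same tower level), I would use the Gibbs-Markov structure: $\varphi$ is constant on the partition elements $\{Z_j\}$, so the bound $s(y,y')\ge 2\psi_k(q)$ gives $\varphi(F^i y)=\varphi(F^i y')$ for $i=0,\dots,2\psi_k(q)-1$. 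Since at most $\psi_k(q)$ complete return times are consumed by the orbit of $q$ during the first $k$ iterations, the cumulative sums that determine the tower heights of $f_\Delta^k q$ and $f_\Delta^k q'$ coincide; in particular, the two images lie in the base $Y$ simultaneously. Combined with the first-return property $\pi(y'',\ell'')=f^{\ell''}y''\notin Y$ for $0<\ell''<\varphi(y'')$, this forces every admissible $v(f_\Delta^k q')$ to vanish, and hence $v_k(q)=0$.

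For part~(b), $v_k$ is constant on stable fibres by construction, so the quotient $\bar v_k:\bar\Delta\to\R$ is well defined and $\supp\bar v_k\subset\bar\pi(\supp v_k)\subset\bar\pi(f_\Delta^{-k}Y)=\bar f_{\bar\Delta}^{-k}\bar Y$, using $\bar\pi\circ f_\Delta=\bar f_{\bar\Delta}\circ\bar\pi$ and $\bar\pi Y=\bar Y$. For any measurable $A\subset\bar\Delta\setminus\bar Y$, the defining identity of the transfer operator gives
\[
\int_A L^k\bar v_k\,d\mu_{\bar\Delta}=\int_{\bar f_{\bar\Delta}^{-k}A}\bar v_k\,d\mu_{\bar\Delta}=0,
\]
since $\bar f_{\bar\Delta}^{-k}A$ is disjoint from $\bar f_{\bar\Delta}^{-k}\bar Y$, and hence from $\supp\bar v_k$. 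Therefore $L^k\bar v_k$ vanishes almost everywhere off $\bar Y$, which is the claim.

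The point I expect to require the most care is the height-alignment bookkeeping in~(a): one has to check that the factor $2$ in $2\psi_k(q)$ genuinely covers every return time appearing in the expression for the tower height of $f_\Delta^k q$, paying attention to the starting height $\ell$. Once this accounting of how the $\psi_k(q)$ returns are distributed among the $k$ iterations is set up, the rest of both parts proceeds essentially automatically.
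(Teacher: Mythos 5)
Your argument for part~(a) follows the same route as the paper: show that if $f_\Delta^k q\notin Y$ then every admissible $q'$ in the infimum also has $f_\Delta^k q'\notin Y$, because $\varphi$ is constant on partition elements and so the visit times to $Y$ synchronize. You spell out the height-alignment bookkeeping that the paper leaves implicit; one small remark is that the factor $2$ is not actually what you need to worry about here, since $s(q,q')\ge\psi_k(q)$ alone is enough to make the $\psi_k(q)$ relevant return times (hence the itineraries over $k$ steps) agree, and the $2\psi_k(q)$ in the definition of $v_k$ is only there so that the contraction estimates in Proposition~\ref{prop-tildev}(b) work out. For part~(b), however, you take a genuinely different and cleaner route. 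The paper expands $L^k\bar v_k$ pointwise as a sum over $\bar f_{\bar\Delta}^k$-preimages weighted by $e^{p_k}$ and shows each summand vanishes when the argument lies outside $\bar Y$; you instead invoke the defining duality $\int_A L^k\bar v_k\,d\mu_{\bar\Delta}=\int_{\bar f_{\bar\Delta}^{-k}A}\bar v_k\,d\mu_{\bar\Delta}$ and use that $\bar f_{\bar\Delta}^{-k}A$ is disjoint from $\supp\bar v_k\subset\bar f_{\bar\Delta}^{-k}\bar Y$. This yields the support statement only almost everywhere rather than pointwise, but that is entirely adequate for how the proposition is used downstream (e.g.\ to rewrite $L^n L^\ell\bar v_\ell\,w$ as $T_n L^\ell\bar v_\ell\,w$), and the argument is more conceptual and avoids the explicit transfer-operator formula. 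Both proofs share the first step, namely pushing $\supp v_k\subset f_\Delta^{-k}Y$ down to $\supp\bar v_k\subset\bar f_{\bar\Delta}^{-k}\bar Y$.
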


\begin{proof}
Assume that $\supp v\subset Y$.
Suppose $q\not\in f_\Delta^{-k}(Y)$, that is $f_\Delta^kq\not\in Y$.
Then $f_\Delta^kq'\not\in Y$ for all $q'$ with $s(q,q')\ge k$,
and hence $v(f_\Delta^kq')=0$ for such points.  In particular, $v_k(q)=0$
proving (a).

Combining part~(a) with the first statement in Proposition~\ref{prop-tildev},
we obtain that $\supp\bar v_k\subset\bar f_{\bar\Delta}^{-k}\bar Y$.
Now $(L^k\bar v_k)(q)=\sum_{\bar f_\Delta^kq'=q}e^{p_k}(q')\bar v_k(q')$
where $p_k=p+p\circ f+\dots+p\circ f^{k-1}$.
Suppose $q\not\in \bar Y$ and that $\bar f_{\bar\Delta}^kq'=q$.
We have $\bar v_k(q')=v(f_\Delta^kq'')$ where $s(q'',q')\ge 2k$ and
hence $s(f_\Delta^kq'',q)\ge k$.  In particular, $f_\Delta^kq''\not\in Y$ so
$\bar v_k(q')=v(f_\Delta^kq'')=0$.   It follows that each term in 
$(L^k\bar v_k)(q)$ is zero, proving (b).
\end{proof}

\section{A key estimate}
\label{sec-key}

In this section, we prove the following key estimate.
Let $\bar Y_k=\bar f_{\bar\Delta}^{-k}\bar Y$.

\begin{lemma} \label{lem-key}
There exists a sequence $c_{k,n}$ with the property that
$\lim_{n\to\infty}c_{k,n}=0$ for all $k$ such that
\[
\Bigl|\int_{\bar\Delta} \Bigl(a_n L^{n-k}v-\int_{\bar\Delta} v\,d\bar\mu_{\bar\Delta}\Bigr) w\,d\bar\mu_{\bar\Delta}\Bigr|\le c_{k,n}\|v\||w|_\infty,
\]
for all $v\in F_\theta(\bar Y)$, $w\in L^\infty(\bar Y_k)$, $k,n\ge0$.
\end{lemma}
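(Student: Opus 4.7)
The plan is to decompose the observable $w$ across the levels of the tower and reduce the problem level by level to the noninvertible mixing result (Theorem~\ref{thm-MTG}), using the following tower identity for the transfer operator: for $v$ supported in the base $\bar Y$ and any $(y,\ell)\in\bar\Delta_\ell$,
\[
(L^m v)(y,\ell) \;=\; \tilde L^{m-\ell}v(y) \quad\text{for }m\ge\ell,\qquad (L^m v)(y,\ell)=0 \quad\text{for }m<\ell,
\]
where $\tilde L^j v:=(L^j v)\cdot 1_{\bar Y}$. This is immediate from the tower structure: the preimages of $(y,\ell)$ under $\bar f_{\bar\Delta}^m$ lying in $\bar Y$ (the support of $v$) are in bijection, with matching Jacobian, with the preimages of $y$ at the base under $\bar f_{\bar\Delta}^{m-\ell}$, since the final $\ell$-step ascent up the tower column contributes no Jacobian factor.

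Split $w=\sum_{\ell\ge 0}w_\ell$ by setting $w_\ell(y):=w(y,\ell)\cdot 1_{\{(y,\ell)\in\bar Y_k\}}$, regarded as an $L^\infty(\bar Y)$ function. The identity then gives
\[
\int_{\bar\Delta}L^{n-k}v\cdot w\,d\mu \;=\; \sum_{\ell=0}^{n-k}\int_{\bar Y}\tilde L^{n-k-\ell}v\cdot w_\ell\,d\bar\mu.
\]
For each $\ell$, Theorem~\ref{thm-MTG} supplies an estimate of the form
\[
a_j\int_{\bar Y}\tilde L^j v\cdot w_\ell\,d\bar\mu \;=\; \int v\,d\bar\mu\,\int w_\ell\,d\bar\mu + O\bigl(\epsilon_j\,\|v\|\,|w_\ell|_1\bigr),\qquad \epsilon_j\to 0.
\]
Taking $j=n-k-\ell$, multiplying by $a_n$, and summing over $\ell$ produces a main term $\sum_\ell(a_n/a_{n-k-\ell})\int v\,d\bar\mu\,\int w_\ell\,d\bar\mu$ and a summed error. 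Since $a_n/a_{n-k-\ell}\to 1$ for each fixed $\ell$ by regular variation of $a_n$, and since $\sum_\ell|w_\ell|_1\le|w|_\infty\,\mu(\bar Y_k)=|w|_\infty$ is finite, a truncation at a fixed cutoff $\ell=L$ handles the low-$\ell$ part and identifies the limit as $\int v\,d\bar\mu\cdot\int w\,d\mu$.

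The main obstacle is controlling the tail contribution from $\ell$ close to $n-k$, equivalently small $j=n-k-\ell$, where the MT rate $\epsilon_j$ is not yet effective and $a_n/a_j$ may grow as large as $a_n$. Here the strategy is to combine a measure tail estimate of the form $\mu(\bar Y_k\cap\bar\Delta_\ell)\le C(k)\ell^{-(1+\beta)}$ for $\ell$ large (obtained by enumerating the return-time itineraries of points in $\bar Y_k\cap\bar\Delta_\ell$ and invoking~\eqref{eq-varphi}, together with~\eqref{eq-smooth1} in case (b) of Theorem~\ref{thm-main}) with the Abel-type bound $\sum_{j\le\delta n}a_n/a_j = O(n)$. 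These combine to a high-$\ell$ contribution of order $n\cdot n^{-(1+\beta)}=n^{-\beta}\to 0$, which absorbs the blow-up of $a_n/a_j$ and closes the argument; the error term arising from $\epsilon_j$ is handled identically.
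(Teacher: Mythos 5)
Your decomposition by tower level is, once unpacked, the same as the paper's: writing $A_j$ for the operator supported on level $j$ of the tower and $T_j=1_YL^j1_Y$, the tower identity you state is exactly the convolution $L^n1_Y=\sum_jA_jT_{n-j}$, and the set $\bar Y_k\cap\bar\Delta_\ell$ is the support of $1_{Y_k}A_\ell 1_Y$, so the measure bound you need is precisely Proposition~\ref{prop-A} of the paper, namely $\mu(\bar Y_k\cap\bar\Delta_\ell)\le\mu(\varphi>\ell)-\mu(\varphi>\ell+k)$.

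The gap is in your claimed pointwise tail estimate $\mu(\bar Y_k\cap\bar\Delta_\ell)\le C(k)\ell^{-(1+\beta)}$, which you assert follows from~\eqref{eq-varphi} alone (with~\eqref{eq-smooth1} as a mere refinement for case~(b)). It does not: regular variation of the tail controls $\mu(\varphi>\ell)$ but places no constraint on the increments $\mu(\varphi>\ell)-\mu(\varphi>\ell+k)$, which can be as large as $\ell(\ell)\,\ell^{-\beta}$ at individual $\ell$ (e.g.\ when the distribution of $\varphi$ is concentrated on a sparse set); that is exactly the content of the smooth tails condition. Without it, your bound for the high-$\ell$ range becomes $\bigl(\sum_{j\le n/2}a_n/a_j\bigr)\cdot\sup_{\ell\ge n/2}\mu(\bar Y_k\cap\bar\Delta_\ell)\ll n\cdot n^{-\beta}=n^{1-\beta}$, which diverges, and the intermediate-$\ell$ range fails too since $\sum_{\ell\ge\ell_0}\ell^{-\beta}$ is not even convergent for $\beta\le1$. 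So as written your argument only covers case~(b), while Lemma~\ref{lem-key} must also serve case~(a). The paper avoids the pointwise bound by telescoping the sum $\sum_{\ell>n/2}\bigl(\mu(\varphi>\ell)-\mu(\varphi>\ell+k)\bigr)\le k\,\mu(\varphi>n/2)$ — a factor $k$ rather than $n$ — and pairs this with the crude bound $a_n/a_{n-k-\ell}\ll a_n$, giving $E_2''\ll a_n\,k\,\mu(\varphi>n)\ll k\,\ell(n)^2n^{1-2\beta}$; this is what tends to zero exactly when $\beta>\tfrac12$, recovering case~(a) without smooth tails. To repair your proof you need to replace the pointwise $\ell^{-(1+\beta)}$ bound by this telescoping summation (and, for the $\epsilon_j$-error term in the intermediate $\ell$-range, the same telescoping is also what makes $\sum_{\ell\ge\ell_0}\mu(\bar Y_k\cap\bar\Delta_\ell)\le k\,\mu(\varphi>\ell_0)$ small).
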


The proof of this result is entirely at the level of the quotient tower and we suppress the bars.  Also, we write $f$ and $\mu$, omitting the subscript $\Delta$.

We require some preliminary notation.   
Define $T_n,P:L^1(Y)\to L^1(Y)$ to be the operators
$T_n v=1_YL^n(1_Yv)$ and $Pv=1_Y\int_Y v\,d\mu$.    

Following~\cite{GouezelPhD} (see also~\cite{Gouezel05,M07,MTapp2} for example) we define a 
sequence of operators $A_n:L^1(Y)\to L^1(\Delta)$ that behave like $L^n$ but summing only over preimages that lie in $Y$ and do not return to $Y$ in $n$ iterates
of $f$.   This is analogous to $T_n=1_YL^n1_Y$ which is like $L^n$ but summing only over preimages that lie in $Y$ and have returned to $Y$ at time $n$.   
In symbols,
\[
(T_nv)(y)=\sum_{\substack{f^nz=y\\y,z\in Y}}e^{p_n}(z)v(z), \quad
(A_nv)(q)=
\!\!\!\!
\sum_{\substack{f^nz=q,z\in Y\\ f^jz\not\in Y, j=1,\dots,n}}
\!\!\!\!
e^{p_n}(z)v(z),
\]
and we make the convention that $T_0=A_0=I$.
Then $L^n1_Y$ is the convolution
\begin{align} \label{eq-convA}
L^n1_Y=\sum_{j=0}^nA_jT_{n-j}.
\end{align}
It is easy to check that 
$(A_jv)(y,\ell)=\delta_{j,\ell}v(y)$
where $\delta_{j,\ell}$ is the Kronecker delta.
In particular,
\begin{align} \label{eq-sumA}
1_\Delta=\sum_{j=0}^\infty A_j1_Y. 
\end{align}

\begin{prop} \label{prop-A}
$|1_{Y_k}A_j1_Y|_1\le \mu(\varphi>j)-\mu(\varphi>j+k)$
for all $j,k$,
\end{prop}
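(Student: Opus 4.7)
The plan is to unwind the definitions and reduce everything to a simple statement about the geometry of the tower.

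First I would use the explicit formula already recorded in the text: $(A_j1_Y)(y,\ell)=\delta_{j,\ell}$, i.e.\ $A_j1_Y$ is the indicator function of the level-$j$ slice $\{(y,j):\varphi(y)>j\}$ of the tower $\Delta$. Multiplying by $1_{Y_k}$ and integrating against $\mu_\Delta=\mu\times\text{counting}$ collapses the sum over levels to a single term, giving
\[
|1_{Y_k}A_j1_Y|_1=\mu\bigl(\{y\in Y:\varphi(y)>j\text{ and }f_\Delta^k(y,j)\in Y\}\bigr).
\]

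The key geometric observation is then the following: from a point $(y,j)$ sitting at height $j$ in the column over $y$, the first iterate of $f_\Delta$ that lands back in the base $Y$ is precisely the $(\varphi(y)-j)$-th iterate (this is immediate from the definition of the tower map, since $f_\Delta$ just climbs the column one level at a time until it jumps to $(Fy,0)$). Consequently, if $f_\Delta^k(y,j)\in Y$ then necessarily $k\ge \varphi(y)-j$, or equivalently $\varphi(y)\le j+k$.

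Combining this with the constraint $\varphi(y)>j$ already present in the integrand shows that the relevant set is contained in $\{j<\varphi\le j+k\}$, whose $\mu$-measure is exactly $\mu(\varphi>j)-\mu(\varphi>j+k)$, yielding the claimed bound. I do not foresee any real obstacle here; the proof is essentially a bookkeeping exercise on the tower, and the only thing to be careful about is correctly identifying the first-return time from a non-base point as $\varphi(y)-j$ rather than $\varphi(y)$.
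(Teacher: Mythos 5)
Your proof is correct and follows essentially the same route as the paper's: both identify $1_{Y_k}A_j1_Y$ as (the indicator of) a subset of the $j$-th level of the tower, observe that returning to the base after $k$ steps from height $j$ forces $\varphi(y)\le j+k$, and hence bound the measure by that of $\{j<\varphi\le j+k\}$. Your write-up merely spells out the ``first return occurs at time $\varphi(y)-j$'' step that the paper leaves implicit.
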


\begin{proof}
Since $1_{Y_k}A_j1_Y$ is supported on the $j$'th level of the tower,
$|1_{Y_k}A_j1_Y|_1= \mu_\Delta(\{\ell=j\}\cap Y_k)$.
But $\{\ell=j\}\cap Y_k\subset\{(y,\ell):\ell=j,\;j<\varphi(y)\le k+j\}$
and so
$\mu_\Delta(\{\ell=j\}\cap Y_k)\le\mu(\varphi>j)-\mu(\varphi>j+k)$.
\end{proof}

Using~\eqref{eq-convA} and~\eqref{eq-sumA}, we obtain
\begin{align*}
 & \int_{\Delta} \Bigl(a_n L^{n-k}v-\int_{\Delta} v\,d\mu\Bigr) w\,d\mu
  =\int_\Delta\Bigl( a_n\sum_{j=0}^{n-k}A_jT_{n-k-j}v-\Bigl(\int_\Delta v\,d\mu\Bigr)\sum_{j=0}^\infty A_j1_Y\Bigr) 1_{Y_k}w\,d\mu \\
& \qquad  =\int_\Delta w\sum_{j=0}^{n-k}1_{Y_k}A_j1_Y\Bigl(a_nT_{n-k-j}v-Pv\Bigr)\,d\mu-
(Pv)\int_\Delta w\sum_{j=n-k+1}^\infty 1_{Y_k}A_j1_Y\,d\mu.
\end{align*}

Define $U_n:F_\theta(Y)\to F_\theta(Y)$, $U_nv=a_nT_nv$.   
Then 
$\int_{\Delta} \Bigl(a_n L^{n-k}v-\int_{\Delta} v\,d\mu\Bigr) w\,d\mu
=E_1(k,n)+E_2(k,n)-E_3(k,n)$, where
\begin{align*}
E_1(k,n) & = 
\int_\Delta w \sum_{j=0}^{n-k}1_{Y_k}A_j1_Y\Bigl(U_{n-k-j}v-Pv\Bigr)\,d\mu \\
E_2(k,n) & = 
\int_\Delta w \sum_{j=0}^{n-k}1_{Y_k}A_j1_Y(a_n-a_{n-k-j})T_{n-k-j}v\,d\mu 
 \\ & \qquad \qquad =\int_\Delta w \sum_{j=0}^{n-k}1_{Y_k}A_j1_Y\Bigl(\frac{a_n}{a_{n-k-j}}-1\Bigr)U_{n-k-j}v\,d\mu \\
E_3(k,n) & =
(Pv)\int_\Delta w\sum_{j=n-k+1}^\infty 1_{Y_k}A_j1_Y\,d\mu.
\end{align*}
We make use of the fact, proved in~\cite{MT12}, that 
\begin{align} \label{eq-Un}
\lim_{n\to\infty}\|U_n-P\|=0.
\end{align}

To estimate $E_1(k,n)$, it is useful to recall
the following ``discrete dominated convergence theorem''.

\begin{prop} \label{prop-DCT}
Let $b_{n,j}\in\R$, $n,j\ge0$.  Suppose that $\lim_{n\to\infty}b_{n.j}=0$
for each $j$ and there is a sequence $c_j>0$ such that
$|b_{n,j}|\le c_j$ for all $j,n$ and $\sum_{j=0}^\infty c_j<\infty$.
Then $\lim_{n\to\infty} \sum_{j=0}^\infty b_{n,j}=0$.
\qed
\end{prop}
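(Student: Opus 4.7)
The plan is to give the standard $\varepsilon/2$ tail-splitting proof, which is essentially Lebesgue's dominated convergence theorem on $\N$ with counting measure but easier to write out directly than to invoke.

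First, I would fix $\varepsilon>0$ and use the summability hypothesis $\sum_{j=0}^\infty c_j<\infty$ to choose $J=J(\varepsilon)$ large enough that the tail satisfies $\sum_{j>J}c_j<\varepsilon/2$. Since $|b_{n,j}|\le c_j$ for all $n$, this yields the uniform tail bound $\sum_{j>J}|b_{n,j}|<\varepsilon/2$ for every $n\ge0$.

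Next, I would handle the finite head $\sum_{j=0}^J b_{n,j}$. Because this is a sum of only finitely many ($J+1$) sequences in the index $n$, and each of them tends to $0$ by hypothesis, I can pick $N=N(\varepsilon,J)$ such that $|b_{n,j}|<\varepsilon/(2(J+1))$ for all $n\ge N$ and all $0\le j\le J$. Summing over $j$ gives $\sum_{j=0}^J|b_{n,j}|<\varepsilon/2$ for $n\ge N$.

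Combining the two estimates via the triangle inequality yields $\bigl|\sum_{j=0}^\infty b_{n,j}\bigr|<\varepsilon$ for all $n\ge N$, which is precisely the claim. There is no real obstacle here: the only point requiring any care is the order of quantifiers (choose $J$ from the tail bound \emph{before} choosing $N$ from pointwise convergence), since one cannot uniformly bound the rate at which $b_{n,j}\to 0$ over $j$. The result could alternatively be obtained as a one-line corollary of the classical dominated convergence theorem applied to the functions $j\mapsto b_{n,j}$ on $\N$ with counting measure and dominating function $j\mapsto c_j$, but the direct argument above is self-contained and seems preferable here.
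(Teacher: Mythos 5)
Your proof is correct and is exactly the standard $\varepsilon/2$ tail-splitting argument; the paper omits the proof entirely (the proposition is stated with an immediate \qed as a standard fact), so your write-up simply supplies the expected routine details, with the quantifier order handled properly.
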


\begin{prop} \label{prop-E1}
$\lim_{n\to\infty}E_1(k,n)=0$ for each fixed $k$.
\end{prop}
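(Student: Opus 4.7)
The plan is to apply the discrete dominated convergence theorem (Proposition~\ref{prop-DCT}) to the sum defining $E_1(k,n)$. Define
\[
b_{n,j}=\begin{cases}\BIG\int_\Delta w\cdot 1_{Y_k}A_j1_Y\,(U_{n-k-j}v-Pv)\,d\mu, & 0\le j\le n-k,\\[.3ex] 0, & j>n-k,\end{cases}
\]
so that $E_1(k,n)=\sum_{j=0}^\infty b_{n,j}$. I need pointwise convergence in $j$ and a summable dominating sequence.

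For the pointwise limit, fix $j$ and $k$. As $n\to\infty$ the index $n-k-j\to\infty$, so by~\eqref{eq-Un} and $|\cdot|_\infty\le\|\cdot\|_\theta$,
\[
|U_{n-k-j}v-Pv|_\infty\le \|U_{n-k-j}-P\|\,\|v\|_\theta\longrightarrow 0.
\]
Since $|b_{n,j}|\le |w|_\infty\,|1_{Y_k}A_j1_Y|_1\,|U_{n-k-j}v-Pv|_\infty$, it follows that $b_{n,j}\to 0$ for each $j$.

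For the dominating sequence, note that \eqref{eq-Un} implies $C^*:=\sup_n\|U_n-P\|<\infty$, so $|U_{n-k-j}v-Pv|_\infty\le C^*\|v\|_\theta$ uniformly in $n,j$. Combining with Proposition~\ref{prop-A} gives
\[
|b_{n,j}|\le C^*|w|_\infty\|v\|_\theta\bigl(\mu(\varphi>j)-\mu(\varphi>j+k)\bigr)=:c_j.
\]
The telescoping identity
\[
\sum_{j=0}^\infty\bigl(\mu(\varphi>j)-\mu(\varphi>j+k)\bigr)=\sum_{j=0}^{k-1}\mu(\varphi>j)\le k
\]
(using $\mu(\varphi>j)\to 0$ and $\mu(Y)=1$) shows $\sum_j c_j<\infty$ for each fixed $k$. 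Proposition~\ref{prop-DCT} then yields $E_1(k,n)\to 0$, completing the argument.

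The main obstacle is conceptual rather than technical: one must recognise that the non-uniformity in $k$ of the dominating series (the bound grows like $k$) is harmless because $k$ is fixed in this proposition, while the true challenge of letting $k\to\infty$ is deferred to the handling of $E_2$ and $E_3$. The rest is a routine application of operator-norm convergence of $U_n$ combined with the tail estimate of Proposition~\ref{prop-A}.
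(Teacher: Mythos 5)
Your proof is correct and essentially identical to the paper's: both estimate each summand via Proposition~\ref{prop-A} and the uniform boundedness plus convergence of $\|U_n-P\|$ from~\eqref{eq-Un}, obtain the summable dominating sequence $c_j\propto\mu(\varphi>j)-\mu(\varphi>j+k)$ via telescoping, and conclude by Proposition~\ref{prop-DCT}. The only cosmetic difference is that you name $b_{n,j}$ the actual integral term while the paper names it the upper bound $(\mu(\varphi>j)-\mu(\varphi>j+k))\|U_{n-k-j}-P\|$.
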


\begin{proof}
We keep $k$ fixed throughout.  By Proposition~\ref{prop-A},  
\[
|E_1(k,n)|\le  |w|_\infty\sum_{j=0}^{n-k}|1_{Y_k}A_j1_Y|_1|U_{n-k-j}v-Pv|_\infty \le
\|v\||w|_\infty\sum_{j=0}^{n-k}b_{n,j}
\]
where
$b_{n,j}=(\mu(\varphi>j)-\mu(\varphi>j+k))
\|U_{n-k-j}-P\|$.
By~\eqref{eq-Un}, $\lim_{n\to\infty}b_{n,j}=0$ for each $j$,
and there is a constant $C$ such that
$b _{n,j}\le c_j=C(\mu(\varphi>j)-\mu(\varphi>j+k))$.
Moreover, $\sum_{j=0}^\infty c_j=C\sum_{j=0}^{k-1}\mu(\varphi>j)<\infty$.
Now apply Proposition~\ref{prop-DCT}.
\end{proof}

Next, it follows from~\eqref{eq-Un} that $\|U_n\|$ is bounded, so
\begin{align*}
|E_2(k,n)| & 
\le |w|_\infty\sum_{j=0}^{n-k} |1_{Y_k}A_j1_Y|_1|U_{n-k-j}v|_\infty\Bigl|\frac{a_n}{a_{n-k-j}}-1\Bigr|
\\ &
\ll \|v\||w|_\infty \sum_{j=0}^{n-k}|1_{Y_k}A_j1_Y|_1\Bigl|\frac{a_n}{a_{n-k-j}}-1\Bigr|.
\end{align*}
It is convenient to split the sum into two parts, defining
\begin{align*}
E_2'(k,n) & = \sum_{0\le j\le n/2}|1_{Y_k}A_j1_Y|_1\Bigl|\frac{a_n}{a_{n-k-j}}-1\Bigr|, \\
E_2''(k,n)   & = \sum_{n/2<j\le n-k}|1_{Y_k}A_j1_Y|_1\Bigl|\frac{a_n}{a_{n-k-j}}-1\Bigr|.
\end{align*}

\begin{prop} \label{prop-E2'}
$\lim_{n\to\infty}E_2'(k,n)=0$ for each fixed $k$.
\end{prop}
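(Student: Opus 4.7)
The plan is to apply the discrete dominated convergence theorem (Proposition~\ref{prop-DCT}) to the sequence
\[
b_{n,j} = |1_{Y_k}A_j1_Y|_1\,\Bigl|\frac{a_n}{a_{n-k-j}}-1\Bigr|, \qquad 0\le j\le n/2,
\]
extended by $b_{n,j}=0$ for $j>n/2$, so that $E_2'(k,n)=\sum_{j=0}^\infty b_{n,j}$.

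First I would establish pointwise convergence: $\lim_{n\to\infty}b_{n,j}=0$ for every fixed $j$. For fixed $k$ and $j$, $n-k-j\to\infty$ and $(n-k-j)/n\to 1$ as $n\to\infty$. Since $a_n$ is regularly varying of index $1-\beta$ (see~\eqref{eq-an}), we have $a_n/a_{n-k-j}\to 1^{1-\beta}=1$, and hence $b_{n,j}\to 0$.

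Next I would produce a summable dominant $c_j$ independent of $n$. For $0\le j\le n/2$ one has $n/(n-k-j)\in[1,2+o(1)]$, so the uniform convergence theorem for regularly varying functions (Karamata) gives a constant $M$ with $a_n/a_{n-k-j}\le M$ uniformly in such $j$ and in $n$ large. Combined with Proposition~\ref{prop-A},
\[
b_{n,j}\le (M+1)\bigl(\mu(\varphi>j)-\mu(\varphi>j+k)\bigr)=:c_j.
\]
The telescoping identity
\[
\sum_{j=0}^\infty c_j=(M+1)\sum_{j=0}^{k-1}\mu(\varphi>j)<\infty
\]
holds because each $\mu(\varphi>j)$ is finite (the return time $\varphi$ is nonintegrable but each tail is a finite number), and this is the one place the fixed $k$ is used decisively. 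With the pointwise vanishing and summable dominant in hand, Proposition~\ref{prop-DCT} then yields $\sum_j b_{n,j}\to 0$, hence $E_2'(k,n)\to 0$.

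The only nonroutine point is the uniform bound in the second step: the ratio $a_n/a_{n-k-j}$ must be controlled uniformly in $j\le n/2$, not just pointwise. This is exactly where one has to invoke the uniform convergence theorem for regularly/slowly varying functions on compact subsets of $(0,\infty)$; without it the dominated convergence argument collapses, since for $j$ close to $n/2$ the ratio $(n-k-j)/n$ can be as small as $1/2$ and $a_n/a_{n-k-j}-1$ does \emph{not} tend to zero for such $j$. The splitting into $j\le n/2$ and $j>n/2$ in the definitions of $E_2'$ and $E_2''$ is precisely what makes this uniform bound available here, while the complementary tail $E_2''(k,n)$ must be handled by different means (presumably by using the smallness of $|1_{Y_k}A_j1_Y|_1$ for large $j$).
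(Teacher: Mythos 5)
Your proof is correct and follows essentially the same route as the paper: decompose as $b_{n,j}=(\mu(\varphi>j)-\mu(\varphi>j+k))\,|a_n/a_{n-k-j}-1|$, check pointwise vanishing via regular variation of $a_n$, produce a summable dominant via the telescoping sum and a uniform bound on the ratio $a_n/a_{n-k-j}$ for $j\le n/2$, and invoke Proposition~\ref{prop-DCT}. The only (inessential) difference is that you obtain the uniform bound on the ratio from the uniform convergence theorem for regularly varying functions, whereas the paper cites Potter's bounds; either standard tool suffices here since the restriction $j\le n/2$ keeps the scaling factor in a fixed compact subset of $(0,\infty)$.
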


\begin{proof}
We keep $k$ fixed throughout. 
By Proposition~\ref{prop-A},
$E_2'(k,n)\le \sum_{0\le j\le n/2}b_{n,j}$ where
$b_{n,j}=(\mu(\varphi>j)-\mu(\varphi>j+k))|\frac{a_n}{a_{n-k-j}}-1|$.
Recalling the definition in~\eqref{eq-an}, we note that $a_n$ is regularly varying (this is immediate for $\beta<1$ and is a consequence of
Karamata for $\beta=1$).
By Potter's bounds, $a_n/a_{n-k-j}$ is bounded for $j,n$ with $j\le n/2$.
Hence $b_{n,j}\ll c_j=\mu(\varphi>j)-\mu(\varphi>j+k)$ which is summable
as in Proposition~\ref{prop-E1}.
Also $\lim_{n\to\infty}a_n/a_{n-k-j}=1$ for each fixed $j$.
Hence we can again apply Proposition~\ref{prop-DCT}.
\end{proof}

Define  $\tilde\ell(n)=\sum_{j<n}\ell(j)j^{-1}$.
\begin{prop} \label{prop-E2''}
$E_2''(k,n)\ll \begin{cases} k\ell(n)^2n^{-(2\beta-1)} & \beta<1 \\
k\tilde\ell(n)\ell(n)n^{-1} & \beta=1 \end{cases}$ for all $k,n$.
\end{prop}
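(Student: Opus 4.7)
The plan is to factor the estimate on $E_2''(k,n)$ into a product of two pieces via the substitution $m=n-k-j$:
\[
E_2''(k,n)=\sum_{0\le m<n/2-k}|1_{Y_k}A_{n-k-m}1_Y|_1\,|a_n/a_m-1|.
\]
By Proposition~\ref{prop-A} the first factor is bounded by $\mu(\varphi>n-k-m)-\mu(\varphi>n-m)$. I would then establish (i) that the $m$-sum of these tail differences is $\ll k\,\mu(\varphi>n)$ and (ii) that $|a_n/a_m-1|\ll a_n$ uniformly on $m\in[0,n/2-k)$. Multiplying and invoking~\eqref{eq-an} yields $E_2''(k,n)\ll k\,a_n\,\ell(n)n^{-\beta}$, which is exactly $k\,\ell(n)^2 n^{-(2\beta-1)}$ up to constants for $\beta<1$ and $k\,\tilde\ell(n)\ell(n)n^{-1}$ for $\beta=1$.

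For (i), a telescoping/index shift (writing $i=n-m$ and $i=n-k-m$) yields
\[
\sum_{0\le m<n/2-k}\bigl(\mu(\varphi>n-k-m)-\mu(\varphi>n-m)\bigr)=\sum_{i=n/2+1}^{n/2+k}\mu(\varphi>i)-\sum_{i=n-k+1}^{n}\mu(\varphi>i)\le k\,\mu(\varphi>n/2),
\]
and by slow variation of $\ell$ together with~\eqref{eq-varphi} one has $\mu(\varphi>n/2)\ll\ell(n)n^{-\beta}$, comparable to $\mu(\varphi>n)$.

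For (ii), I would exploit the monotonicity of $(a_n)$. When $\beta<1$ the sequence is increasing with $a_m\ge a_1>0$ for $m\ge1$, so $|a_n/a_m-1|\le a_n/a_m\le a_n/a_1\ll a_n$ uniformly; the boundary value $m=0$ (where the formula $a_n=d_\beta^{-1}\ell(n)n^{1-\beta}$ degenerates) is handled by reverting to the unnormalized form $(a_n-a_m)T_m v$ used in the definition of $E_2(k,n)$, in which $T_0=I$ and no division by $a_0$ is required. When $\beta=1$, $a_n=\tilde\ell(n)$ is slowly varying and the trivial inequality $|a_n/a_m-1|\le 1\ll a_n$ does the job in the non-integrable regime where $\tilde\ell(n)\to\infty$. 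The hard part will be this uniform control of $|a_n/a_m-1|$, and in particular the clean handling of the $m=0$ boundary contribution for $\beta<1$ and the growth check for $\beta=1$; once that is in hand, combining with the telescoping estimate above immediately delivers the proposition.
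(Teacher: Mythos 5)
Your proposal follows essentially the same route as the paper's proof: after the cosmetic substitution $m=n-k-j$, the two key steps — the uniform bound $|a_n/a_{n-k-j}-1|\ll a_n$ on the range $j>n/2$ (via boundedness of $1/a_m$), and the telescoping estimate $\sum_{n/2<j\le n-k}(\mu(\varphi>j)-\mu(\varphi>j+k))\le k\mu(\varphi>n/2)\ll k\mu(\varphi>n)$ — are exactly the paper's, which then concludes $|E_2''(k,n)|\ll a_n\,k\,\mu(\varphi>n)$ and substitutes~\eqref{eq-an} and~\eqref{eq-varphi}. Your explicit handling of the $m=0$ endpoint by reverting to $(a_n-a_0)T_0v$ is a careful touch that the paper glosses over. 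One minor inaccuracy worth noting: for $\beta=1$ the claim $|a_n/a_m-1|\le 1$ is not literally correct once $a_n$ is the increasing, diverging normalizing sequence — for bounded $m$ one has $a_n/a_m\to\infty$ — but the required bound $|a_n/a_m-1|\ll a_n$ follows by precisely the same lower-bound argument on $a_m$ that you used for $\beta<1$, so the conclusion is unaffected.
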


\begin{proof}
We have $a_n/a_{n-k-j}\ll a_n$, so by Proposition~\ref{prop-A},
\begin{align*}
|E_2''(k,n)| & \ll a_n\!\!\!\!\sum_{n/2<j\le n-k}\!\!\!\! (\mu(\varphi>j)-\mu(\varphi>j+k))
\le a_nk\mu(\varphi>n/2)
\ll a_nk\mu(\varphi>n),
\end{align*}
and the result follows by definition of $a_n$.
\end{proof}

\begin{prop} \label{prop-E3}  For $k\le n/2$,
$|E_3(k,n)| \ll \Bigl|\int_\Delta v\,d\mu\Bigr||w|_\infty k\ell(n)n^{-\beta}$.
\end{prop}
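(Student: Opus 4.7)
The plan is to factor out the scalar $\int_\Delta v\,d\mu$ and the $L^\infty$-norm of $w$, thereby reducing the proposition to a tail estimate on $\sum_{j=n-k+1}^\infty |1_{Y_k}A_j 1_Y|_1$. Since $Pv=1_Y\int_\Delta v\,d\mu$ is constant on $Y$, one has $A_j(Pv)=(\int_\Delta v\,d\mu)\,A_j 1_Y$, which allows the scalar to be pulled in front of the integral:
\[
|E_3(k,n)|\le \Bigl|\int_\Delta v\,d\mu\Bigr|\,|w|_\infty\,\sum_{j=n-k+1}^\infty |1_{Y_k}A_j 1_Y|_1.
\]
Applying Proposition~\ref{prop-A} bounds each summand by $\mu(\varphi>j)-\mu(\varphi>j+k)$, so everything reduces to controlling the tail sum $S:=\sum_{j=n-k+1}^\infty\bigl(\mu(\varphi>j)-\mu(\varphi>j+k)\bigr)$.

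Next, I would handle $S$ by telescoping. Forming the partial sum up to index $N$ and reindexing the second copy by the shift $j\mapsto j+k$, the middle terms cancel and one is left with
\[
\sum_{j=n-k+1}^N\bigl(\mu(\varphi>j)-\mu(\varphi>j+k)\bigr)=\sum_{j=n-k+1}^{n}\mu(\varphi>j)-\sum_{j=N+1}^{N+k}\mu(\varphi>j).
\]
Because $\mu(\varphi>j)=\ell(j)j^{-\beta}\to 0$, the second sum vanishes as $N\to\infty$, and hence $S=\sum_{j=n-k+1}^{n}\mu(\varphi>j)$, a sum of exactly $k$ terms.

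Finally, for $k\le n/2$ every index $j$ in that range satisfies $j\ge n/2$, so by regular variation of $j\mapsto \ell(j)j^{-\beta}$ (the Potter bound argument already used in Proposition~\ref{prop-E2'}) we have $\mu(\varphi>j)\ll \ell(n)n^{-\beta}$ uniformly for $j\in[n-k+1,n]$. Summing the $k$ bounds gives $S\ll k\ell(n)n^{-\beta}$, which combined with the factored-out constants yields the asserted estimate. No regularity of $v$ or $w$ is used beyond integrability of $v$ and $|w|_\infty<\infty$; the only delicate point is the telescoping, which relies on $\mu(\varphi>j)\to 0$, and this is automatic from the tail assumption on $\varphi$. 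I therefore expect no real obstacle in executing this plan.
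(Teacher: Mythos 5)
Your proposal is correct and follows essentially the same route as the paper: factor out the scalar and $|w|_\infty$, apply Proposition~\ref{prop-A}, telescope the tail sum to $\sum_{j=n-k+1}^{n}\mu(\varphi>j)$, and finish with Potter's bounds (the paper bounds this by $k\mu(\varphi>n-k+1)$ and applies Potter once, while you apply Potter termwise, a trivial rearrangement). The only cosmetic difference is that you spell out the telescoping step, which the paper leaves implicit.
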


\begin{proof}
By Proposition~\ref{prop-A}, suppressing the factor 
$|\int_{\Delta}v\,d\mu||w|_\infty$,
\begin{align*}
|E_3(k,n)| & \le 
\sum_{j=n-k+1}^\infty 
|1_{Y_k}A_j1_Y|_1
\le 
\sum_{j=n-k+1}^\infty 
\mu(\varphi>j)-\mu(\varphi>j+k) \\ &
=\sum_{j=n-k+1}^n\mu(\varphi>j)\le k\mu(\varphi>n-k+1).
\end{align*}
Since $\mu(\varphi>n)$ is regularly varying, it follows from Potter's bounds that $\mu(\varphi>n-k+1)/\mu(\varphi>n)$ is bounded for $k,n$ with $k\le n/2$.
Hence $|E_3(k,n)|\ll k\mu(\varphi>n)$ proving the result.
\end{proof}

\begin{pfof}{Lemma~\ref{lem-key}}
Since $\int_{\Delta} \Bigl(a_n L^{n-k}v-\int_{\Delta} v\,d\mu\Bigr) w\,d\mu
=E_1(k,n)+E_2(k,n)-E_3(k,n)$,
the result follows from Propositions~\ref{prop-E1}--\ref{prop-E3}.
\end{pfof}

\section{Proof of Theorem~\ref{thm-main}(a)}
\label{sec-a}

In this section, we prove part (a) of our main result, Theorem~\ref{thm-main},
establishing mixing for infinite measure nonuniformly hyperbolic diffeomorphisms with $\beta>\frac12$.

Given $C^\eta$ observables $v_0,w_0:M\to\R$, supported in $Y$, let
$v=v_0\circ \pi,w=w_0\circ \pi:\Delta\to\R$ be the lifted observables to the tower.  (Since we are interested in observables on $Y$ and $F=f^\varphi$ is a first return map, $\pi|_Y={\rm id}_Y$ and we could simply write $v_0=v$ and $w_0=w$, but this does not seem helpful.)
Since $\pi:\Delta\to M$ is a semiconjugacy and $\mu=\pi_*\mu_\Delta$,
to prove Theorem~\ref{thm-main} it is suffices to work 
entirely in the context of the tower map 
$f_\Delta:\Delta\to\Delta$ and the one-sided quotient
$\bar f_{\bar\Delta}:\bar\Delta\to\bar\Delta$.
For notational simplicity, from now on we write $f$, $\bar f$, $\mu$ and $\bar\mu$ instead of $f_\Delta$, $\bar f_{\bar\Delta}$,
$\mu_\Delta$ and $\bar\mu_{\bar\Delta}$.
Thus the result we want to prove becomes
\[
\lim_{n\to\infty}
a_n\int_\Delta v\,w\circ f^n\,d\mu=\int_\Delta v\,d\mu \int_\Delta  w\,d\mu,
\]
for $v,w:Y\to\R$ H\"older.
Again for notational simplicity, from now on we write $\|v\|$
instead of $\|v\|_{C^\eta}$ and similarly for $w$.

Since the arguments for dealing with $v$ and $w$ are very different, it is convenient to give the arguments separately in Subsections~\ref{sec-v}
and~\ref{sec-w}.   The combined argument is given in Subsection~\ref{sec-vw}.

\subsection{The $v$ observable}
\label{sec-v}

In this subsection, we show how to deal with the $v$ observable:
we assume the set up of Theorem~\ref{thm-main}(a), but we
suppose that $w$ depends only on future coordinates (that is, $w$ is constant along stable disks and hence projects to an observable on $\bar\Delta$).

\begin{lemma} \label{lem-v}
There exists a constant $C>0$ such that
\[
\Bigl|
a_n\int_{\Delta}v\,w\circ f^n\,d\mu-\int_\Delta v\,d\mu \int_\Delta w\,d\mu
\Bigr|\le C\|U_n-P\|\|v\||w|_1.
\]
for all $v,w:Y\to\R$ where $v$ is H\"older and 
$w$ is $L^\infty$ such that $w$ depends only on future coordinates, and all $n\ge0$.
\end{lemma}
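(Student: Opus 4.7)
The plan is to exploit the hypothesis that $w$ depends only on future coordinates: it descends to a function $\bar w$ on $\bar\Delta$ with $|\bar w|_1 = |w|_1$ and $|\bar w|_\infty = |w|_\infty$, so the only real task is to approximate $v$ suitably and then invoke the one-sided machinery. As a preliminary, I would reformulate Theorem~\ref{thm-MTG}(a) as the quantitative statement
\[
\Bigl| a_n \int_{\bar\Delta} u \cdot \bar w \circ \bar f^n \, d\bar\mu - \int u \,d\bar\mu \int \bar w \,d\bar\mu \Bigr| \le \|U_n - P\| \, \|u\|_\theta \, |\bar w|_1
\]
for $u \in F_\theta(\bar Y)$ and $\bar w \in L^1$ supported in $\bar Y$; this holds because, for such $u$, $a_n 1_{\bar Y} L^n u$ equals $U_n u$ and $U_n u \to Pu$ in $F_\theta \subset L^\infty(\bar Y)$ at rate $\|U_n - P\|$.

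Next, I would fix $k \ge 0$ and use $\mu$-invariance to write $I_n = \int v \circ f^k \cdot w \circ f^{n+k}\, d\mu$, then approximate $v \circ f^k$ by the future-dependent $v_k$ from Proposition~\ref{prop-tildev}. Because $v_k$ and $w \circ f^{n+k}$ are both constant on stable fibers, the main piece descends and can be pushed back by the transfer operator:
\[
\int_\Delta v_k \cdot w \circ f^{n+k}\, d\mu = \int_{\bar\Delta} \bar v_k \cdot \bar w \circ \bar f^{n+k}\, d\bar\mu = \int_{\bar\Delta} L^k \bar v_k \cdot \bar w \circ \bar f^n\, d\bar\mu.
\]
By Propositions~\ref{prop-tildev}(d) and~\ref{prop-Y}(b), $\tilde v := L^k \bar v_k$ lies in $F_\theta(\bar Y)$ with $\|\tilde v\|_\theta \le C\|v\|$ and $\supp \tilde v \subset \bar Y$, so the preliminary quotient estimate applies, yielding
\[
\Bigl| a_n \int_\Delta v_k \cdot w \circ f^{n+k}\, d\mu - \int \tilde v\, d\bar\mu \int \bar w\, d\bar\mu \Bigr| \le C \|U_n - P\|\, \|v\|\, |w|_1.
\]

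Two small errors then remain: (i) the approximation error
\[
a_n \Bigl|\int (v \circ f^k - v_k) \cdot w \circ f^{n+k}\, d\mu\Bigr| \le a_n C \|v\| \, |\gamma^{\psi_k}|_1 \, |w|_\infty,
\]
from Proposition~\ref{prop-tildev}(b) and $L^1$--$L^\infty$ duality; and (ii) the constant shift $\bigl|\int v\,d\mu - \int \tilde v\, d\bar\mu\bigr|\cdot\bigl|\int \bar w\, d\bar\mu\bigr| \le C\|v\|\, |\gamma^{\psi_k}|_1\, |w|_1$, using $\int v\,d\mu = \int v \circ f^k\, d\mu$ together with $\int \tilde v\, d\bar\mu = \int \bar v_k\, d\bar\mu = \int v_k\, d\mu$. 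By Proposition~\ref{prop-tildev}(c), for each fixed $n$ both errors tend to $0$ as $k \to \infty$, giving
\[
\Bigl| a_n I_n - \int v\,d\mu \int w\,d\mu\Bigr| \le C \|U_n - P\|\, \|v\|\, |w|_1 + \varepsilon_n(k),
\]
with $\varepsilon_n(k) \to 0$; since the left-hand side is independent of $k$, sending $k \to \infty$ establishes the lemma.

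The main obstacle is error (i), whose prefactor $a_n$ diverges as $n \to \infty$ and initially seems to overwhelm the smallness of $|\gamma^{\psi_k}|_1$. The resolution is structural rather than analytical: since the target inequality has no $k$-dependence, for each individual $n$ the factor $a_n$ is merely a finite constant, so one may take $k$ as large as required for (i) and (ii) to be dominated by $\|U_n-P\|\|v\||w|_1$; the uniform bound then falls out by letting $k \to \infty$. This absorption trick is the only genuinely new input; the remaining content is the bookkeeping of transfer operator identities together with the future-dependence of $w$.
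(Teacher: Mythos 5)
Your argument is essentially the paper's: both fix a finite time shift, replace $v\circ f^k$ (the paper uses $\ell$) by the future-measurable approximant $v_k$, push the main term to the quotient where Proposition~\ref{prop-Y}(b) and Proposition~\ref{prop-tildev}(d) give $L^k\bar v_k\in F_\theta(\bar Y)$ so that the $\|U_n-P\|$-bound applies, and then observe that the two error terms (your (i) and (ii), the paper's $I_2$ and $I_3$) vanish as $k\to\infty$ with $n$ fixed, while the target bound is $k$-independent. The only cosmetic difference is that you pre-package the one-sided mixing bound as a standalone quantitative lemma before invoking it.
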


\begin{proof}
Write
\begin{align*}
& a_n\int_\Delta v\, w\circ f^n\,d\mu -\int_\Delta v\,d\mu\int_\Delta w\,d\mu
\\ & \qquad =a_n\int_\Delta v\circ f^\ell\, w\circ f^{n+\ell}\,d\mu -\int_\Delta v\circ f^\ell\,d\mu\int_\Delta w\,d\mu 
= I_1(\ell,n)+I_2(\ell,n)-I_3(\ell,n),
\end{align*}
where
\begin{align*}
& I_1(\ell,n)  = a_n\int_\Delta v_\ell\, w\circ f^{n+\ell}\,d\mu-\int_\Delta v_\ell\,d\mu\int_\Delta w\,d\mu, \\
& I_2(\ell,n)  = a_n\int_\Delta (v\circ f^\ell-v_\ell)\,w\circ f^{n+\ell}\,d\mu, \quad
 I_3(\ell,n)  = \int_\Delta (v\circ f^\ell-v_\ell)\,d\mu\int_\Delta w\,d\mu.
\end{align*}

By Proposition~\ref{prop-tildev} and the assumption on $w$, both 
$v_\ell$ and $w$ depend only on future coordinates.  Hence we can write
\begin{align*}
\int_\Delta v_\ell\, w\circ f^{n+\ell}\,d\mu
 & = \int_{\bar\Delta} \bar v_\ell\, w \circ \bar f^{n+\ell}\,d\bar\mu  
  = \int_{\bar \Delta} L^nL^\ell \bar v_\ell\, w\,d\bar\mu
\end{align*}
Moreover, 
$\supp L^\ell\bar v_\ell\subset \bar Y$ 
by Proposition~\ref{prop-Y}(b) and 
$\supp w\subset \bar Y$ by assumption, so $L^nL^\ell \bar v_\ell\, w=T_nL^\ell \bar v_\ell\, w$.  Hence
\begin{align*}
I_1(\ell,n)  &  = a_n\int_{\bar \Delta} T_nL^\ell \bar v_\ell\, w\,d\bar\mu
-\int_{\bar \Delta} L^\ell \bar v_\ell\,d\bar\mu\,\int_{\bar \Delta} w\,d\bar\mu 
 = \int_{\bar \Delta} \Bigl(a_nT_nL^\ell \bar v_\ell-P
L^\ell \bar v_\ell\,\Bigr)w\,d\bar\mu,
\end{align*}
and so
\begin{align*}
|I_1(\ell,n)| & \le  |(a_nT_n-P)L^\ell\bar v_\ell|_\infty\,|w|_1  
 \le \|U_n-P\|\|L^\ell\bar v_\ell\|_\theta|w|_1  
 \ll \|U_n-P\|\|v\||w|_1,
\end{align*}
where we have used Proposition~\ref{prop-tildev}(d).

Next, by Proposition~\ref{prop-tildev}(b),
\[
|I_2(\ell,n)|\ll a_n\|v\||w|_\infty|\gamma^{\psi_\ell}|_1,  \quad |I_3(\ell,n)| \ll \|v\||w|_1|\gamma^{\psi_\ell}|_1.
\]
By Proposition~\ref{prop-tildev}(c), $\lim_{\ell\to\infty}I_j(\ell,n)=0$ for all $n$ for $j=2,3$.  The result follows.
\end{proof}

\subsection{The $w$ observable}
\label{sec-w}

In this subsection, we show how to deal with the $w$ observable:
we assume the set up of Theorem~\ref{thm-main}(a), but we
suppose that $v$ depends only on future coordinates.

\begin{lemma} \label{lem-w}
Let $c_{k,n}$ be as in Lemma~\ref{lem-key}.  There exists a constant $C>0$ such that
\[
\Bigl|
a_n\int_{\Delta}v\,w\circ f^n\,d\mu-\int_\Delta v\,d\mu \int_\Delta w\,d\mu\Bigr|\le C(c_{k,n}+|\gamma^{\psi_k}|_1)\|v\|\|w\|,
\]
for all $v,w:Y\to\R$ H\"older 
such that $v$ depends only on future coordinates, and all $0\le k\le n$.
\end{lemma}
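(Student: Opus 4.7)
The plan is to mirror the proof of Lemma~\ref{lem-v}, but with the roles of $v$ and $w$ interchanged.  Since $v$ depends only on future coordinates, it descends to an observable $\bar v\in F_\theta(\bar Y)$ on the one-sided quotient $\bar\Delta$ with $\|\bar v\|_\theta\le C\|v\|$ (using H\"older continuity of $v$ together with property~(P2)(ii) to compare distances in $d$ and $d_\theta$).  So there is no need to approximate $v$; instead I will approximate $w\circ f^n$ by $w_k\circ f^{n-k}$, where $w_k$ is as in Proposition~\ref{prop-tildev}.  The key structural feature is that $w_k$ descends to $\bar w_k$ on $\bar\Delta$ with $\supp\bar w_k\subset\bar Y_k$ by Proposition~\ref{prop-Y}(a), which is precisely the support hypothesis required to invoke Lemma~\ref{lem-key}.

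The proof proceeds via the decomposition
\[
a_n\int v\,w\circ f^n\,d\mu-\int v\,d\mu\int w\,d\mu=J_1+J_2+J_3,
\]
where
\[
J_1=a_n\int v\,w_k\circ f^{n-k}\,d\mu-\int v\,d\mu\int\bar w_k\,d\bar\mu,
\]
\[
J_2=a_n\int v\,(w\circ f^n-w_k\circ f^{n-k})\,d\mu,\qquad J_3=\int v\,d\mu\int(\bar w_k-w)\,d\mu.
\]
For $J_1$, both $v$ and $w_k$ are constant on stable fibers, so the integral reduces on $\bar\Delta$ to $\int L^{n-k}\bar v\cdot\bar w_k\,d\bar\mu$, and Lemma~\ref{lem-key} together with $|\bar w_k|_\infty\le|w|_\infty\le\|w\|$ yields $|J_1|\le C\,c_{k,n}\|v\|\|w\|$.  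For $J_3$, invariance gives $\int w\circ f^k\,d\mu=\int w\,d\mu$, so Proposition~\ref{prop-tildev}(b) shows $|\int(w_k-w\circ f^k)\,d\mu|\le C\|w\||\gamma^{\psi_k}|_1$, hence $|J_3|\le C\|v\|\|w\||\gamma^{\psi_k}|_1$.

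The main obstacle is the bound on $J_2$.  Writing $w\circ f^n-w_k\circ f^{n-k}=(w\circ f^k-w_k)\circ f^{n-k}$ and using $f$-invariance of $\mu$, one obtains $J_2=a_n\int(v\circ f^{-(n-k)})(w\circ f^k-w_k)\,d\mu$, and the pointwise estimate of Proposition~\ref{prop-tildev}(b) controls $|w\circ f^k-w_k|$ by $C\|w\|\gamma^{\psi_k}$.  A naive application of this bound produces an unwanted $a_n$-factor; the technical heart of the argument is to exploit the joint support constraints (both $w\circ f^k$ and $w_k$ are supported in $f^{-k}Y$ by Proposition~\ref{prop-Y}(a), while $v\circ f^{-(n-k)}$ is supported in $f^{n-k}Y$ because $\supp v\subset Y$) together with $f$-invariance to translate the integral to a set of finite measure, on which the pointwise $\gamma^{\psi_k}$ bound produces exactly $C\|v\|\|w\||\gamma^{\psi_k}|_1$.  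Combining the three pieces gives the stated estimate $C(c_{k,n}+|\gamma^{\psi_k}|_1)\|v\|\|w\|$.
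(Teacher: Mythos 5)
Your decomposition is essentially the paper's (after shifting by $f^k$ using $f$-invariance: your $J_1,J_2,J_3$ correspond to the paper's $I_1,I_2,-I_3$), and your treatment of $J_1$ and $J_3$ is correct. The gap is in $J_2$.

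You correctly identify the problem — a naive pointwise bound on $|w\circ f^k-w_k|$ leaves an unwanted $a_n$ factor — and you correctly note the relevant support constraints (the integrand lives in $f^{n-k}Y\cap f^{-k}Y$). But the claim that these constraints ``together with $f$-invariance translate the integral to a set of finite measure, on which the pointwise $\gamma^{\psi_k}$ bound produces exactly $C\|v\|\|w\||\gamma^{\psi_k}|_1$'' is not a proof, and the stated conclusion is in fact wrong. After the translation you get $a_n\int_\Delta 1_Y\cdot 1_{Y\circ f^n}\cdot\gamma^{\psi_k}\circ f^{n-k}\,d\mu$, and the set $Y\cap f^{-n}Y$ has $\mu$-measure $\sim a_n^{-1}$; so bounding $\gamma^{\psi_k}\le1$ gives only $O(1)$, while bounding $\gamma^{\psi_k}$ by its $L^1$-norm is illegitimate since it is only small in $L^1$, not in $L^\infty$ on the domain. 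What you actually need is that $\gamma^{\psi_k}\circ f^{n-k}$ and $1_Y\cdot 1_Y\circ f^n$ decorrelate as $n\to\infty$ — and that is itself a mixing statement which you cannot take for granted.

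The paper's resolution is to recognize this as yet another correlation integral and to invoke Lemma~\ref{lem-key} a second time: after quotienting, one obtains $a_n\int_{\bar\Delta}(L^{n-k}1_{\bar Y})\,1_{\bar Y_k}\gamma^{\psi_k}\,d\bar\mu$, with $1_{\bar Y}\in F_\theta(\bar Y)$ and $1_{\bar Y_k}\gamma^{\psi_k}\in L^\infty(\bar Y_k)$ — exactly the hypotheses of Lemma~\ref{lem-key}. Splitting $a_nL^{n-k}1_{\bar Y}=(a_nL^{n-k}1_{\bar Y}-\int 1_{\bar Y})+\int 1_{\bar Y}$ then yields the two pieces $I_2'$ and $I_2''$, giving the bound $C\|v\|\|w\|\bigl(c_{k,n}+|\gamma^{\psi_k}|_1\bigr)$ — note the $c_{k,n}$ contribution which your argument misses. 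The final estimate of the lemma happens to look the same because $J_1$ already contributes $c_{k,n}$, but the $J_2$ step as you wrote it is unjustified, and the second application of Lemma~\ref{lem-key} is the genuine content you are missing.
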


\begin{proof}
Write
\begin{align} \label{eq-I123}
 & a_n\int_{\Delta}v\,w\circ f^n\,d\mu-\int_\Delta v\,d\mu \int_\Delta w\,d\mu  \\
&\quad  =  a_n\int_{\Delta}v\circ f^k\,w\circ f^{n+k}\,d\mu-\int_\Delta v\,d\mu \int_\Delta w\circ f^k\,d\mu 
 = 
I_1(k,n)+I_2(k,n)-I_3(k,n), \nonumber
\end{align}
where
\begin{align*}
I_1(k,n) & = a_n\int_{\Delta}v\circ f^k\,w_k\circ f^n\,d\mu-\int_\Delta v\,d\mu \int_\Delta w_k\,d\mu, \\
 I_2(k,n) & = a_n\int_{\Delta}v\circ f^k\,(w\circ f^k-w_k)\circ f^n\,d\mu, \\
I_3(k,n) & =\int_\Delta v\,d\mu \int_\Delta (w\circ f^k-w_k)\,d\mu.
\end{align*}

By the same argument used for $I_1(\ell,n)$ in Subsection~\ref{sec-v},
$\int_{\Delta} v\circ f^k\, w_k\circ f^n\,d\mu=
\int_{\bar\Delta} v\circ \bar f^k\, \bar w_k\circ \bar f^n\,d\bar\mu$.  Hence
for all $k\le n$,
\[I_1(k,n)  =
a_n\int_{\bar\Delta} v\circ \bar f^k\, \bar w_k\circ \bar f^n\,d\bar\mu-\int_{\bar\Delta} v\,d\bar\mu\int_{\bar\Delta} \bar w_k\,d\bar\mu 
=\int_{\bar\Delta} \Bigl(a_n L^{n-k}v-\int_{\bar\Delta} v\,d\bar\mu\Bigr) \bar w_k\,d\bar\mu.
\]
By Lemma~\ref{lem-key} and
Proposition~\ref{prop-tildev}(a),
$|I_1(k,n)|\le c_{k,n}\|v\||w|_\infty$.

Next, note that 
\begin{align*}
I_2(k,n) & =a_n\int_\Delta (1_Yv)\circ f^k [1_{f^{-k}Y}(w\circ f^k-w_k)]\circ f^n\,d\mu
\\ & =
a_n\int_\Delta (1_Y\, 1_Y\circ f^n)\circ f^k\,   v\circ f^k\, (w\circ f^k-w_k)\circ f^n\,d\mu,
\end{align*}
so by Proposition~\ref{prop-tildev}(b), for $n\ge k$,
\begin{align*}
|I_2(k,n)| & \ll  
|v|_\infty\|w\|a_n\int_\Delta (1_Y\, 1_Y\circ f^n)\circ f^k \,\gamma^{\psi_k}\circ f^n\,d\mu   
\\ & =|v|_\infty\|w\|a_n\int_{\bar\Delta} 1_{\bar Y}\, (1_{\bar Y}\circ \bar f^k\, \gamma^{\psi_k})\circ \bar f^{n-k}\,d\bar\mu   
\\ & = |v|_\infty\|w\|a_n\int_{\bar\Delta} (L^{n-k}1_{\bar Y})\, 1_{\bar Y_k} \,\gamma^{\psi_k}\,d\bar\mu
=|v|_\infty\|w\|(I_2'(k,n)+I_2''(k,n)),
\end{align*}
where
\begin{align*}
I_2'(k,n) & = \int_{\bar\Delta} \Bigl(a_nL^{n-k}1_{\bar Y}-\int_{\bar\Delta}1_{\bar Y}\,d\bar\mu\Bigr)\, 1_{\bar Y_k} \,\gamma^{\psi_k}\,d\bar\mu, \quad
 I_2''(k,n)  = \int_{\bar\Delta} 1_{\bar Y_k} \,\gamma^{\psi_k}\,d\bar\mu.
\end{align*}
It follows from Lemma~\ref{lem-key} that
$|I_2'(k,n)|\le c_{k,n}\|1_{\bar Y}\||1_{Y_k}\gamma^{\psi_k}|_\infty
\le c_{k,n}$.
Also $|I_2''(k,n)|\le |\gamma^{\psi_k}|_1$.

Finally, by Proposition~\ref{prop-tildev}(b),
$|I_3(k,n)|\ll |v|_1\|w\||\gamma^{\psi_k}|_1$.
\end{proof}

\subsection{The $v$ and $w$ observables}
\label{sec-vw}

Finally, we consider the general case where $v$ and $w$ depend on the past and future.

\begin{lemma} \label{lem-vw}
Let $c_{k,n}$ be as in Lemma~\ref{lem-key}.  There exists a constant $C>0$ such that
\[
\Bigl|
a_n\int_{\Delta}v\,w\circ f^n\,d\mu-\int_\Delta v\,d\mu \int_\Delta w\,d\mu\Bigr|\le C(c_{k,n}+|\gamma^{\psi_k}|_1)\|v\|\|w\|,
\]
for all $v,w:Y\to\R$ H\"older, and all $0\le k\le n$.
\end{lemma}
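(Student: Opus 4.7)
The plan is to combine the techniques of Lemmas~\ref{lem-v} and~\ref{lem-w} by approximating both observables at a common level~$k$. Using $f$-invariance of $\mu$, I first rewrite
\[
a_n\!\int_\Delta v\,w\circ f^n\,d\mu-\int_\Delta v\,d\mu\!\int_\Delta w\,d\mu
=a_n\!\int_\Delta v\circ f^k\cdot w\circ f^{n+k}\,d\mu-\int_\Delta v\circ f^k\,d\mu\!\int_\Delta w\,d\mu,
\]
and then expand
\[
v\circ f^k\cdot w\circ f^{n+k}=v_k\,w_k\circ f^n+R_1+R_2+R_3,
\]
where $R_1=(v\circ f^k-v_k)\,w_k\circ f^n$, $R_2=v_k(w\circ f^k-w_k)\circ f^n$, and $R_3=(v\circ f^k-v_k)(w\circ f^k-w_k)\circ f^n$.

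For the main term $a_n\int v_k\,w_k\circ f^n\,d\mu$, both factors are constant on stable disks (Proposition~\ref{prop-tildev}) and supported in $Y_k$ (Proposition~\ref{prop-Y}(a)), so the integral descends to $\bar\Delta$ and equals $a_n\int L^{n-k}V\cdot\bar w_k\,d\bar\mu$ where $V=L^k\bar v_k$. Proposition~\ref{prop-Y}(b) and Proposition~\ref{prop-tildev}(d) give $V\in F_\theta(\bar Y)$ with $\|V\|_\theta\ll\|v\|$, while $\bar w_k\in L^\infty(\bar Y_k)$ has $|\bar w_k|_\infty\le\|w\|$, so Lemma~\ref{lem-key} produces
\[
a_n\!\int v_k\,w_k\circ f^n\,d\mu=\int v_k\,d\mu\int w_k\,d\mu+O(c_{k,n}\|v\|\|w\|),
\]
using $\int V\,d\bar\mu=\int\bar v_k\,d\bar\mu=\int v_k\,d\mu$.

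The errors $R_2$ and $R_3$ are bounded pointwise by $C\|v\|\|w\|\,1_{Y_k}\cdot(1_{Y_k}\gamma^{\psi_k})\circ f^n$ via Proposition~\ref{prop-tildev}(b), and are handled exactly as the term $I_2$ in the proof of Lemma~\ref{lem-w}: $f$-invariance shifts the integrand to $1_Y\cdot(1_{Y_k}\gamma^{\psi_k})\circ f^{n-k}$, which projects to $\bar\Delta$ and becomes $a_n\int L^{n-k}1_{\bar Y}\cdot 1_{\bar Y_k}\gamma^{\psi_k}\,d\bar\mu$; Lemma~\ref{lem-key} with test functions $1_{\bar Y}$ and $1_{\bar Y_k}\gamma^{\psi_k}$ then yields the bound $C\|v\|\|w\|(c_{k,n}+|\gamma^{\psi_k}|_1)$. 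The term $R_1$ is more delicate: its bound $|R_1|\le C\|v\|\|w\|\,\gamma^{\psi_k}\,1_{Y_k}\cdot 1_{Y_k}\circ f^n$ carries the decay factor $\gamma^{\psi_k}$ at the identity rather than composed with $f^n$, so the naive shift by $-k$ would produce $\gamma^{\psi_k}\circ f^{-k}$, which is past-dependent and does not descend to $\bar\Delta$. Instead I push the bound directly to $\bar\Delta$ and apply the cocycle identity $L^k(h\circ\bar f^k\cdot g)=h\cdot L^k g$ to rewrite the quotient integral as $a_n\int L^{n-k}(1_{\bar Y}\cdot L^k\gamma^{\psi_k})\cdot 1_{\bar Y_k}\,d\bar\mu$; Lemma~\ref{lem-key} with test function $1_{\bar Y}\cdot L^k\gamma^{\psi_k}\in F_\theta(\bar Y)$ (whose Lipschitz norm is uniformly bounded in $k$ by the Gibbs--Markov regularity of the piecewise-constant function $\gamma^{\psi_k}$) then produces the same bound $O((c_{k,n}+|\gamma^{\psi_k}|_1)\|v\|\|w\|)$.

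Finally, the constant-term discrepancy $\int v_k\,d\mu\int w_k\,d\mu-\int v\,d\mu\int w\,d\mu$ is $O(\|v\|\|w\||\gamma^{\psi_k}|_1)$ by Proposition~\ref{prop-tildev}(b,c) together with the $f$-invariance identity $\int v\circ f^k\,d\mu=\int v\,d\mu$ and its analogue for $w$. Summing the main contribution with all error bounds yields the claim. The principal obstacle is the asymmetric treatment of $R_1$: verifying the uniform Lipschitz estimate for $1_{\bar Y}L^k\gamma^{\psi_k}$ is the only step where the argument must go beyond what was already needed for Lemma~\ref{lem-w}.
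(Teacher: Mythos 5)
Your proposal is correct, but it takes a genuinely different route from the paper. The paper uses a \emph{two-level} approximation, precomposing $v$ with $f^\ell$ and $w$ with $f^k$ for independent parameters $\ell$ and $k$. This produces five terms $I_1,\dots,I_5$; the two terms carrying the $v$-side error (namely $I_3$ and $I_5$) are bounded crudely by $a_n|\gamma^{\psi_\ell}|_1$ and $|\gamma^{\psi_\ell}|_1$ respectively, and both vanish as $\ell\to\infty$ for each fixed $(k,n)$. Since the left-hand side is independent of $\ell$, the paper simply sends $\ell\to\infty$ and is left with precisely the $k$-dependent bound of the statement, using only the machinery already set up for Lemmas~\ref{lem-v} and~\ref{lem-w}. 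You instead approximate both $v$ and $w$ at the common level $k$, giving a cleaner decomposition into a main term and three error terms $R_1,R_2,R_3$. The main term and $R_2,R_3$ go through exactly as in the paper. But, as you rightly flag, $R_1=(v\circ f^k-v_k)\,w_k\circ f^n$ is the problematic piece: the decay factor $\gamma^{\psi_k}$ now sits at time zero rather than time $n$, and the naive time-shift produces the past-dependent $\gamma^{\psi_k}\circ f^{-k}$. Your fix -- using $L^k(1_{\bar Y}\circ\bar f^k\cdot\gamma^{\psi_k})=1_{\bar Y}\cdot L^k\gamma^{\psi_k}$ and feeding $V=1_{\bar Y}L^k\gamma^{\psi_k}$ into Lemma~\ref{lem-key} -- is valid, but it requires the additional estimate $\sup_k\|1_{\bar Y}L^k\gamma^{\psi_k}\|_\theta<\infty$, which you assert rather than prove. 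That estimate is in fact true: the $L^\infty$ bound is immediate since $\gamma^{\psi_k}\le1$ and $L^k1=1$, while for the Lipschitz seminorm one checks that for corresponding branch preimages $q',q''$ of $(y,0),(y',0)$ one has $\psi_k(q')=\psi_k(q'')$ (the return times to the base are determined by the shared itinerary), so only the Jacobians differ, and bounded distortion for $\bar F$ then gives the required modulus of continuity -- essentially the same argument as for Proposition~\ref{prop-tildev}(d). The trade-off is clear: the paper's three-parameter device costs nothing technically because $\ell$ is free to absorb the $v$-side error, whereas your two-parameter version is tighter and more symmetric but obliges you to establish the extra regularity of $1_{\bar Y}L^k\gamma^{\psi_k}$, the step you correctly identified as the principal obstacle.
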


\begin{proof}
We combine the methods used to prove Lemmas~\ref{lem-v} and~\ref{lem-w}.
Write
\begin{align*}
& a_n\int_\Delta v\,w\circ f^n\,d\mu-\int_\Delta v\,d\mu\int_\Delta w\,d\mu 
\\ &\qquad  = 
a_n\int_\Delta v\circ f^{\ell+k}\,w\circ f^{k+n+\ell}\,d\mu-\int_\Delta v\circ f^\ell\,d\mu\int_\Delta w\circ f^k\,d\mu 
\\ & \qquad = I_1(k,\ell,n)+I_2(k,\ell,n)+I_3(k,\ell,n)-I_4(k,\ell,n)-I_5(k,\ell,n),
\end{align*}
where
\begin{align*}
I_1(k,\ell,n) & = a_n\int_\Delta v_\ell\circ f^k\,w_k\circ f^{n+\ell}\,d\mu-\int_\Delta v_\ell\,d\mu\int_\Delta w_k\,d\mu, && \\
I_2(k,\ell,n)  & = a_n\int_\Delta v_\ell\circ f^k\,(w\circ f^k-w_k)\circ f^{n+\ell}\,d\mu,  \\
I_3(k,\ell,n) & = a_n\int_\Delta (v\circ f^\ell-v_\ell)\circ f^k\,w\circ f^{k+n+\ell}\,d\mu, \\
 I_4(k,\ell,n)  & = \int_\Delta v_\ell\,d\mu \int_\Delta (w\circ f^k-w_k)\,d\mu, \\
I_5(k,\ell,n) & = \int_\Delta (v\circ f^\ell-v_\ell)\,d\mu \int_\Delta w\circ f^k\,d\mu.
\end{align*}

Now
\begin{align*}
\int_\Delta v_\ell\circ f^k\,w_k\circ f^{n+\ell}\,d\mu & =
\int_{\bar\Delta}\bar v_\ell\,\bar w_k\circ \bar f^{n-k+\ell}\,d\bar\mu 
= \int_{\bar\Delta}L^{n-k}L^\ell \bar v_\ell\,\bar w_k\,d\bar\mu.
\end{align*}
Hence
\begin{align*}
I_1(k,\ell,n) & = a_n\int_{\bar\Delta} L^{n-k}L^\ell \bar v_\ell\,\bar w_k\,d\bar\mu
- \int_{\bar\Delta}L^\ell \bar v_\ell\,d\bar\mu \int_{\bar\Delta}\bar w_k\,d\bar\mu
 \\ & 
=\int_{\bar\Delta}\Bigl(a_nL^{n-k}L^\ell \bar v_\ell- \int_{\bar\Delta}L^\ell \bar v_\ell\,d\bar\mu\Bigr)\bar w_k\,d\bar\mu.
\end{align*}
By Proposition~\ref{prop-Y}(b), $\supp L^\ell\bar v_\ell\subset \bar Y$.
Hence by Lemma~\ref{lem-key} and Proposition~\ref{prop-Y}(a,d),
\[
|I_1(k,\ell,n)|\le c_{k,n}\|L^\ell\bar v_\ell\||\bar w_k|_\infty
\ll c_{k,n}\|v\||w|_\infty.
\]

Arguing as for $I_2(k,n)$ in the proof of Lemma~\ref{lem-w},
\begin{align*}
 |I_2(k,\ell,n)| 
 & \ll |v|_\infty \|w\| a_n\int_\Delta 1_Y\circ f^{k+\ell}\,1_Y\circ f^{k+n+\ell}\,\gamma^{\psi_k}\circ f^{n+\ell}\,d\mu \\ &
= |v|_\infty \|w\| a_n\int_{\bar\Delta} 1_{\bar Y} \,(1_{\bar Y}\circ \bar f^{k}\,\gamma^{\psi_k})\circ \bar f^{n-k}\,d\bar\mu \\ & 
= |v|_\infty \|w\| a_n\int_{\bar\Delta} (L^{n-k}1_{\bar Y}) \,1_{\bar Y_k}\,\gamma^{\psi_k}\,d\bar\mu \\ & 
= |v|_\infty \|w\|\Bigl\{ \int_{\bar\Delta} (a_nL^{n-k}1_{\bar Y}-\int_{\bar\Delta}1_{\bar Y}\,d\bar\mu) \,1_{\bar Y_k}\,\gamma^{\psi_k}\,d\bar\mu+\int_{\bar\Delta}1_{\bar Y_k}\,\gamma^{\psi_k}\,d\bar\mu\Bigr\} \\&
\le  |v|_\infty \|w\|(c_{k,n}+|\gamma^{\psi_k}|_1).
\end{align*}

By Proposition~\ref{prop-tildev}(b),
\begin{align*}
|I_3(k,\ell,n)| & \ll a_n\|v\||w|_\infty|\gamma^{\psi_\ell}|_1, \quad
|I_4(k,\ell,n)|\ll |v|_\infty\|w\||\gamma^{\psi_k}|_1, 
 \\ & 
|I_5(k,\ell,n)|\ll \|v\| |w|_1|\gamma^{\psi_\ell}|_1.
\end{align*}

Combining these estimates, letting $\ell\to\infty$, and using
Proposition~\ref{prop-tildev}(c), we obtain the required result.
\end{proof}

\begin{pfof}{Theorem~\ref{thm-main}(a)}
Recall that $\lim_{n\to\infty}c_{k,n}=0$ for each $k$.  Hence
it follows from Lemma~\ref{lem-vw} 
that $\limsup_{n\to\infty}|a_n\int_\Delta v\,w\circ f^n\,d\mu-\int_\Delta  v\,d\mu \int_\Delta   w\,d\mu|\ll |\gamma^{\psi_k}|_1$.
The result follows from Proposition~\ref{prop-tildev}(c).
\end{pfof}

\section{Proof of Theorem~\ref{thm-main}(b)}
\label{sec-b}

In this section, we prove part~(b) of our main theorem, dealing with the
case $\beta\in(0,\frac12]$.   Recall that it is necessary to assume an additional condition even for the noninvertible result, namely the
smooth tails condition $\mu(\varphi=n)\le C\ell(n) n^{-(\beta+1)}$.
The proof of Theorem~\ref{thm-main}(b) is almost identical to the proof
of part (a).  The only estimate that requires modification is the one
for $E_2''(k,n)$ in Proposition~\ref{prop-E2''}.

In fact, the milder smooth tail condition
\begin{align} \label{eq-smooth}
\mu(\varphi=n)\le C\ell(n)n^{-(\beta+q)},
\end{align}
suffices, provided that $q>1-\frac12\beta$.

\begin{rmk}  \label{rmk-smooth}
Let $q\in[0,1]$.
Given that $\mu(\varphi>n)=\ell(n)n^{-\beta}$, condition~\eqref{eq-smooth} is equivalent to the condition that
\begin{align*}
|\ell(n+1)-\ell(n)|\ll \ell(n) n^{-q}.
\end{align*}
\end{rmk}

Recall that $a_n=d_\beta^{-1}\ell(n)n^{1-\beta}$.  We list some elementary consequences of smooth tails.

\begin{prop} \label{prop-tails}
Assume smooth tails with $q\in(1-\beta,1]$.  Then
\begin{itemize}
\item[(i)] $\mu(\varphi>j)-\mu(\varphi>j+k)\ll k\ell(j)j^{-(\beta+q)}$.
\item[(ii)] $\BIG\Bigl|\frac{a_{n+j}}{a_n}-1\Bigr|\ll jn^{-q}$.
\qed
\end{itemize}
\end{prop}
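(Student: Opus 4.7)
The plan is to prove the two parts more-or-less independently, using the smooth tails condition in its equivalent differentiated form (Remark~\ref{rmk-smooth}) plus Potter's bounds to transfer between nearby arguments of the slowly varying function $\ell$.

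For part~(i), I would write the difference as a telescoping sum
\[
\mu(\varphi>j)-\mu(\varphi>j+k)=\sum_{i=j+1}^{j+k}\mu(\varphi=i)
\]
and apply the smooth tails hypothesis~\eqref{eq-smooth} to each term, giving the pointwise bound $\mu(\varphi=i)\ll\ell(i)i^{-(\beta+q)}$. I would then split into two regimes. In the regime $k\le j$, all indices $i$ lie in $[j,2j]$, so Potter's bounds yield $\ell(i)i^{-(\beta+q)}\ll\ell(j)j^{-(\beta+q)}$ and summing over $k$ terms gives the claim. In the regime $k>j$, the telescoping sum is at most $\mu(\varphi>j)=\ell(j)j^{-\beta}$, so the inequality $j^q\le j\le k$ (valid since $q\le1$) upgrades this directly to $k\ell(j)j^{-(\beta+q)}$.

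For part~(ii), I would first treat the case $\beta\in(0,1)$, where $a_n=d_\beta^{-1}\ell(n)n^{1-\beta}$, so
\[
\frac{a_{n+j}}{a_n}-1=\frac{\ell(n+j)}{\ell(n)}\Bigl(1+\SMALL\frac{j}{n}\Bigr)^{1-\beta}-1.
\]
The slowly-varying factor is controlled by summing the derivative-type bound from Remark~\ref{rmk-smooth}: $|\ell(n+j)-\ell(n)|\le\sum_{m=n}^{n+j-1}|\ell(m+1)-\ell(m)|\ll\sum_{m=n}^{n+j-1}\ell(m)m^{-q}$, which by Potter's bounds is $\ll j\ell(n)n^{-q}$ when $j\le n$, while the power factor is $O(j/n)=O(jn^{-q})$ by Taylor expansion. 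The regime $j>n$ is handled by a crude argument: $|a_{n+j}/a_n-1|$ grows at most polynomially in $j/n$ of order $1-\beta$, and since $q\le1$ one checks that $(j/n)^{1-\beta}\ll jn^{-q}$. For $\beta=1$, $a_n=\tilde\ell(n)$ is itself slowly varying (by Karamata), and the same differentiated form of smooth tails applies to $\tilde\ell$ after a short calculation estimating $\tilde\ell(n)-\tilde\ell(n+j)=\sum_{i=n+1}^{n+j}\ell(i)/i$.

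The main obstacle I anticipate is the bookkeeping to show the bounds in part~(ii) hold uniformly across all regimes of $j$ relative to $n$ (small $j$ requires a first-order Taylor estimate; large $j$ requires crude control using $q\le1$ together with the hypothesis $q>1-\beta$ to beat the growth of $(j/n)^{1-\beta}$). Apart from this case-split, the argument is a routine exercise in regular variation and Potter's bounds, which probably explains why the proposition is stated with the proof suppressed.
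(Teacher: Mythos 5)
The paper states Proposition~6.3 without proof (the \verb|\qed| appears directly after the statement), so there is no authorial argument to compare against; the proposition is left to the reader as an exercise in regular variation. Your proof is correct and is the standard argument one would supply: telescoping plus Potter's bounds for (i), with the harmless case split $k\le j$ versus $k>j$; and for (ii), factoring $a_{n+j}/a_n$ into a slowly varying ratio and a power ratio, handling $j\le n$ by summing the differenced smooth-tails estimate from Remark~6.2 together with a first-order Taylor bound, and handling $j>n$ by the crude polynomial-growth bound. One small point of care: in the $j>n$ regime for $\beta\in(0,1)$, the growth exponent is really $1-\beta+\epsilon$ rather than $1-\beta$ because of the slowly varying factor (Potter), but as you indicate this does not change the conclusion since the verification $(j/n)^{1-\beta+\epsilon}\ll jn^{-q}$ for $j>n$ only uses $q\le1$ and $\epsilon<\beta$. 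It is also worth noting (as your argument reveals) that the hypothesis $q>1-\beta$ is not actually used in the proof of the proposition itself; it is imposed because it is needed downstream, in Proposition~6.4 and the proof of Theorem~2.3(b).
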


\begin{prop}   \label{prop-E2''_tails}
Assume the smooth tails condition with $q\in(1-\beta,1)$.
Then $E_2''(k,n)\ll k\ell(n) n^{-(\beta+2q-2)}$.
If $q=1$, then
$E_2''(k,n)\ll k\ell(n)n^{-\beta}\log n$.
\end{prop}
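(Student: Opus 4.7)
The plan is to preserve the overall structure of the proof of Proposition~\ref{prop-E2''}, but to replace the two crude estimates used there by the sharper ones available under smooth tails, namely Proposition~\ref{prop-tails}(i) and (ii).

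First, I would bound the tail factor in each summand of $E_2''$ using Proposition~\ref{prop-A} together with Proposition~\ref{prop-tails}(i):
\[
|1_{Y_k} A_j 1_Y|_1 \le \mu(\varphi>j) - \mu(\varphi>j+k) \ll k \ell(j) j^{-(\beta+q)}.
\]
For $j$ in the summation range $n/2 < j \le n-k$, slow variation of $\ell$ together with $j \asymp n$ yields $\ell(j) j^{-(\beta+q)} \ll \ell(n) n^{-(\beta+q)}$, so this tail factor is uniformly bounded by $k\ell(n)n^{-(\beta+q)}$.

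Second, the crude estimate $|a_n/a_{n-k-j}-1| \ll a_n$ used in the proof of Proposition~\ref{prop-E2''} should be sharpened using Proposition~\ref{prop-tails}(ii). Applying (ii) with base $n-k-j$ and increment $k+j$, and noting that $k+j \le n$, we obtain
\[
\Bigl| \frac{a_n}{a_{n-k-j}} - 1 \Bigr| \ll (k+j)(n-k-j)^{-q} \ll n\,(n-k-j)^{-q}.
\]
Substituting $m = n-k-j$ and combining both estimates, the sum collapses to
\[
E_2''(k,n) \ll k \ell(n) n^{1-\beta-q} \sum_{m=1}^{n/2} m^{-q}.
\]
For $q \in (1-\beta, 1)$, the elementary asymptotic $\sum_{m=1}^{N} m^{-q} \ll N^{1-q}$ produces $E_2''(k,n) \ll k\ell(n)n^{2-\beta-2q} = k\ell(n)n^{-(\beta+2q-2)}$, matching the first claim. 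For $q = 1$, $\sum_{m=1}^{N} m^{-1} \ll \log N$ produces $E_2''(k,n) \ll k\ell(n)n^{-\beta}\log n$, which is the second.

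The main bookkeeping obstacle is the endpoint $m = 0$ (equivalently $j = n-k$), where $a_0$ may vanish so that $|a_n/a_0 - 1|$ is not directly meaningful. This contributes a single summand, which I would handle via the equivalent unnormalised form $|a_n - a_0|\, |T_0 v|_\infty = a_n|v|_\infty$; its contribution is then $\ll k\ell(n)n^{-(\beta+q)} a_n \ll k\ell(n)^2 n^{1-2\beta-q}$, easily dominated by the main bound whenever $q > 1 - \beta/2$. A secondary technical point, easily verified from the explicit formula $a_n = d_\beta^{-1}\ell(n)n^{1-\beta}$, is that the estimate from Proposition~\ref{prop-tails}(ii) remains a valid (if not always tight) upper bound throughout the range $m \in [1, n/2]$, which is all that the argument requires.
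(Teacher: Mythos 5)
Your proof follows the paper's argument exactly: replace the crude bounds used in Proposition~\ref{prop-E2''} with Propositions~\ref{prop-A} and~\ref{prop-tails}(i) for the tail factor, use Proposition~\ref{prop-tails}(ii) for the ratio $|a_n/a_{n-k-j}-1|$, note that $\ell(j)j^{-(\beta+q)}\asymp\ell(n)n^{-(\beta+q)}$ and $k+j\le n$ on the summation range, and sum $\sum m^{-q}$ over $m\lesssim n/2$. This is precisely the paper's chain of estimates, so the approach is the same.

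Your additional remark about the endpoint $j=n-k$ (i.e.\ $m=0$) is a reasonable piece of bookkeeping that the paper glosses over; the contribution $\ll k\ell(n)^2 n^{1-2\beta-q}$ is indeed negligible. One small correction: that endpoint term is dominated by the main bound $k\ell(n)n^{-(\beta+2q-2)}$ whenever $\ell(n)n^{q-\beta-1}\ll 1$, which holds for all $q\le 1$ since $q-\beta-1<0$ automatically; the condition $q>1-\beta/2$ you cite is the one needed for the \emph{overall} estimate to tend to zero, not for the endpoint to be absorbed.
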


\begin{proof}
Recall that
$E_2''(k,n)  = \sum_{n/2<j\le n-k}|1_{Y_k}A_j1_Y|_1\Bigl|\frac{a_n}{a_{n-k-j}}-1\Bigr|$.
By Propositions~\ref{prop-A} and~\ref{prop-tails},
\begin{align*}
E_2''(k,n) & \ll  k\sum_{j=n/2}^{n-k}\ell(j)j^{-(\beta+q)}(k+j)(n-k-j)^{-q}
 \ll k\ell(n) n^{1-\beta-q}\sum_{j=n/2}^{n-k}(n-k-j)^{-q} 
\end{align*}
and the result follows.
 \end{proof}

In particular, if $q>1-\frac12\beta$, then
$\lim_{k\to\infty}\lim_{n\to\infty}E_2''(k,n)=0$.
All the other terms were covered
in the proof of Theorem~\ref{thm-main}(a) and hence the proof of
Theorem~\ref{thm-main}(b) is complete.

\section{Mixing rates}
\label{sec-rates}

As mentioned in the introduction, we obtain some results on mixing rates and higher order asymptotics, but in general they are fairly weak.
However, if we assume 
sufficiently rapid contraction along stable manifolds and smooth tails with $q$ large enough, then we obtain essentially optimal results.   For simplicity, throughout this section we assume exponential contraction along stable manifolds
and the smooth tails condition~\eqref{eq-smooth1} with $q=1$.

\subsection{Some more estimates}

\begin{lemma} \label{lem-moreE1}
Assume the smooth tails condition~\eqref{eq-smooth1}.  
If the mixing rate in the noninvertible case is given by
$\|a_nT_n-P\|\ll (\log n)^c n^{-\tau}$, where $\tau\in(0,1)$, $c\ge0$, then
$|E_1(k,n)|\ll \|v\||w|_\infty k (\log n)^c  n^{-\tau}$ for all $k,n$.
\end{lemma}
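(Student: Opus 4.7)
The plan is to revisit the telescoping bound from the proof of Proposition~\ref{prop-E1}, but now exploit both the quantitative noninvertible mixing rate $\|U_m - P\| \ll (\log(m+2))^c(m+2)^{-\tau}$ (valid uniformly in $m\ge 0$, using boundedness of $\|U_m-P\|$ for small $m$) and the refined smooth-tails estimate $\mu(\varphi>j)-\mu(\varphi>j+k) \ll k\,\ell(j)\,j^{-(\beta+1)}$ from Proposition~\ref{prop-tails}(i). Starting from
\[
|E_1(k,n)|\le \|v\||w|_\infty\sum_{j=0}^{n-k}\bigl(\mu(\varphi>j)-\mu(\varphi>j+k)\bigr)\,\|U_{n-k-j}-P\|,
\]
I would first reduce to the case $k\le n/2$; the complementary case $k>n/2$ follows trivially from the crude bound $|E_1(k,n)|\ll \|v\||w|_\infty\,(n-k+1)$ combined with $k(\log n)^c n^{-\tau}\gtrsim n^{1-\tau}(\log n)^c$, since $\tau<1$.

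For $k\le n/2$, I would split the sum at $j=n/2$. On the range $j\le n/2$ we have $n-k-j\ge n/4$, so the mixing-rate factor is bounded by $(\log n)^c n^{-\tau}$ and can be pulled outside the sum. What remains is the telescoping quantity $\sum_{j=0}^{\lfloor n/2\rfloor}\bigl(\mu(\varphi>j)-\mu(\varphi>j+k)\bigr)\le \sum_{j=0}^{k-1}\mu(\varphi>j)\le k$, giving the desired contribution of order $k\,(\log n)^c\,n^{-\tau}$.

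For the range $n/2<j\le n-k$, the smooth-tails estimate bounds the first factor uniformly by $k\,\ell(n)\,n^{-(\beta+1)}$. Substituting $m=n-k-j$ reduces the remaining sum to $\sum_{m=0}^{n/2}\|U_m-P\|\ll (\log n)^c n^{1-\tau}$ (since $\tau<1$). The total contribution is $\ll k\,\ell(n)(\log n)^c n^{-(\beta+\tau)}$, which is absorbed into $k(\log n)^c n^{-\tau}$ because $\ell(n)n^{-\beta}\to 0$ for any slowly varying $\ell$ with $\beta>0$.

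The only real subtlety will be the large-$j$ contribution, since near $j=n-k$ the mixing factor $\|U_{n-k-j}-P\|$ is no longer small. This is precisely the role of the smooth-tails hypothesis, which upgrades the trivial monotone bound $\mu(\varphi>j)-\mu(\varphi>j+k)\le \mu(\varphi>j)$ to an extra factor of $k/j$; without this upgrade the tail would produce a spurious $k\,\ell(n)n^{-\beta}$ term. The remaining ingredients are bookkeeping, and the constants are all absolute once $\|U_m-P\|$ is known to be bounded.
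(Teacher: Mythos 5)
Your proposal is in essence the same argument as the paper's proof, which simply applies Propositions~\ref{prop-A} and~\ref{prop-tails}(i) to get $|1_{Y_k}A_j1_Y|_1\ll k\ell(j)j^{-(\beta+1)}$ and then cites the standard fact that the convolution of the summable sequence $\ell(n)n^{-(\beta+1)}$ with $(\log n)^cn^{-\tau}$ is again $O((\log n)^cn^{-\tau})$; you are spelling out that convolution estimate explicitly by a split-the-sum argument. Your treatment of the small-$j$ part via the telescoping bound $\sum_j(\mu(\varphi>j)-\mu(\varphi>j+k))\le k$ is an acceptable substitute for integrating the smooth-tails bound, and your treatment of the large-$j$ tail via $\sum_{m\le n/2}\|U_m-P\|\ll(\log n)^cn^{1-\tau}$, with the resulting $k\ell(n)(\log n)^cn^{-(\beta+\tau)}$ absorbed using $\ell(n)n^{-\beta}\to 0$, is exactly the right idea.

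There is one bookkeeping slip. Having reduced to $k\le n/2$, you assert that $n-k-j\ge n/4$ on the range $j\le n/2$; this fails when $k$ and $j$ are both close to $n/2$ (take $k=j=\lfloor n/2\rfloor$, giving $n-k-j\approx 0$, where $\|U_{n-k-j}-P\|$ is merely bounded, not $O((\log n)^cn^{-\tau})$). The fix is routine: either first reduce to $k\le n/4$ and dispatch $n/4<k\le n/2$ by the same trivial bound you invoke for $k>n/2$, or split the sum at $j=(n-k)/2$ rather than at $j=n/2$. With the latter choice, on $j\le(n-k)/2$ one has $n-k-j\ge(n-k)/2\ge n/4$ so the mixing-rate factor can be pulled out, while on $(n-k)/2<j\le n-k$ one has $j\ge(n-k)/2\ge n/4$, so the smooth-tails bound $\ell(j)j^{-(\beta+1)}\ll\ell(n)n^{-(\beta+1)}$ applies uniformly and the rest of your tail estimate goes through verbatim.
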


\begin{proof}
Recall that
$|E_1(k,n)|  \le 
\|v\||w|_\infty  \sum_{j=0}^{n-k}|1_{Y_k}A_j1_Y|_1\|U_{n-k-j}-P\|$.
By Propositions~\ref{prop-A} and~\ref{prop-tails}(i),
\[
|E_1(k,n)|  \le 
\|v\||w|_\infty  k\sum_{j=0}^{n-k}\ell(j)j^{-(\beta+1)}(\log (n-k-j))^c(n-k-j)^{-\tau},
\]
which gives the desired result since the convolution of $\ell(n)n^{-(\beta+1)}$
and $(\log n)^cn^{-\tau}$ is $O((\log n)^cn^{-\tau})$.
\end{proof}

\begin{lemma} \label{lem-moreE2}
Assume the smooth tails condition~\eqref{eq-smooth1}.  Then
$|E_2(k,n)|\ll \|v\||w|_\infty (k^2n^{-1}+k\ell(n)n^{-\beta}\log n)$
for $k\le n/3$.
\end{lemma}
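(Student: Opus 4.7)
The plan is to retain the decomposition $E_2(k,n) = E_2'(k,n) + E_2''(k,n)$ used in the proofs of Propositions~\ref{prop-E2'} and~\ref{prop-E2''_tails}, since the low-$j$ range ($0 \le j \le n/2$) and the high-$j$ range ($n/2 < j \le n-k$) call for quite different estimates. The high-$j$ part is already covered: Proposition~\ref{prop-E2''_tails} applied with $q=1$ gives $E_2''(k,n) \ll k\ell(n) n^{-\beta}\log n$, which supplies the second term in the claimed bound once the factor $\|v\||w|_\infty$ (pulled out when $E_2'$, $E_2''$ were defined) is restored.

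Thus the real task is to show $E_2'(k,n) \ll k^2 n^{-1} + k\ell(n)n^{-\beta}\log n$, and for this the two consequences of smooth tails recorded in Proposition~\ref{prop-tails} are the key inputs. Part (i), with $q=1$, yields the mass estimate $|1_{Y_k}A_j 1_Y|_1 \ll k\ell(j) j^{-(\beta+1)}$ for $j \ge 1$ (with the $j=0$ term contributing separately at most $|1_{Y_k}A_0 1_Y|_1\,|a_n/a_{n-k}-1| \ll k/n$, which is absorbed below). Part (ii) gives $|a_n/a_{n-k-j} - 1| \ll (k+j)/(n-k-j)$; the hypothesis $k \le n/3$ together with $j \le n/2$ forces $n-k-j \ge n/6$, so this simplifies to $(k+j)/n$.

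Combining the two estimates,
\[
E_2'(k,n) \;\ll\; \frac{k}{n}\sum_{j=1}^{\lfloor n/2\rfloor}\ell(j)\,j^{-(\beta+1)}(k+j) \;+\; \frac{k}{n},
\]
and I would split the sum according to the summand $k+j$. The $k$-piece is $(k^2/n)\sum_{j\ge 1}\ell(j)j^{-(\beta+1)}$; since $\beta>0$ and $\ell$ is slowly varying, Potter's bounds make the series convergent, so this piece is $O(k^2/n)$. The $j$-piece is $(k/n)\sum_{j=1}^{\lfloor n/2\rfloor}\ell(j)j^{-\beta}$, which by Karamata's theorem is $O(\ell(n)n^{1-\beta})$ for $\beta<1$ and $O(\ell(n)\log n)$ for $\beta=1$; in either case this piece is at most $k\ell(n)n^{-\beta}\log n$. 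Combining with $E_2''$ and multiplying by $\|v\||w|_\infty$ produces the stated estimate.

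The only real subtlety is the range analysis: the hypothesis $k\le n/3$ is used precisely to guarantee $n-k-j \asymp n$ on the range $j\le n/2$, so that Proposition~\ref{prop-tails}(ii) takes the clean linear form $(k+j)/n$ rather than blowing up near $j=n-k$. Beyond this, the argument is a straightforward convolution split plus a direct appeal to Karamata; no new technical input is required beyond the smooth-tails consequences already isolated in Proposition~\ref{prop-tails}.
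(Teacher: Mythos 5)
Your proposal follows essentially the same route as the paper: the same split $E_2 = E_2' + E_2''$, the same invocation of Proposition~\ref{prop-E2''_tails} for the high range, and the same combination of Propositions~\ref{prop-A} and~\ref{prop-tails} on the low range, with $k\le n/3$ ensuring $n-k-j\asymp n$. Your separate treatment of the $j=0$ term and the split into the $k$-piece and $j$-piece match the paper's calculation (the paper writes the sum from $j=1$ and closes with Karamata directly, yielding $E_2'\ll k^2n^{-1}+k\ell(n)n^{-\beta}$ without the extra $\log n$, but your marginally weaker bound for $E_2'$ is still absorbed by the $\log n$ already present in $E_2''$).
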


\begin{proof}
We already obtained this estimate for $E_2''(k,n)$ in Proposition~\ref{prop-E2''_tails} so it remains to handle $E_2'(k,n)$.
The following very rough estimate suffices.

Recall that
$E_2'(k,n)  = \sum_{0\le j\le n/2}|1_{Y_k}A_j1_Y|_1\Bigl|\frac{a_n}{a_{n-k-j}}-1\Bigr|$.
By Propositions~\ref{prop-A} and~\ref{prop-tails},
\begin{align*}
|E_2'(k,n)| & \ll 
k\sum_{j=1}^{n/2} \ell(j)j^{-(\beta+1)}(k+j)(n-k-j)^{-1}
\ll kn^{-1}\sum_{j=1}^{n/2} \ell(j)j^{-(\beta+1)}(k+j)
\\ & = k^2n^{-1}\sum_{j=1}^{n/2} \ell(j)j^{-(\beta+1)}
+ kn^{-1}\sum_{j=1}^{n/2} \ell(j)j^{-\beta}
\ll k^2n^{-1}+k\ell(n)n^{-\beta}.
\end{align*}
\end{proof}

\begin{cor} \label{cor-rate}  
Assume the smooth tails condition~\eqref{eq-smooth1}.  
If the mixing rate in the noninvertible case is given by
$\|a_nT_n-P\|\ll (\log n)^c n^{-\tau}$, where $\tau\in(0,1)$, $c\ge0$, then
\begin{align*}
& \Bigl|a_n\int_Mv\,w\circ f^n\,d\mu-\int_M v\,d\mu \int_M  w\,d\mu\Bigr|
\\ & \qquad\qquad \ll \|v\|\|w\|\{k(\log n)^c n^{-\tau}+k^2n^{-1}+k\ell(n)n^{-\beta}\log n+
|\gamma^{\psi_k}|_1\},
\end{align*}
for all H\"older $v,w:M\to\R$ supported in $Y$, and all $0\le k\le n/3$.
\end{cor}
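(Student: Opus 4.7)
The plan is to combine the new quantitative estimates for $E_1$ and $E_2$ (Lemmas~\ref{lem-moreE1} and~\ref{lem-moreE2}) with the existing estimate for $E_3$ (Proposition~\ref{prop-E3}) to derive a quantitative refinement of Lemma~\ref{lem-key}, and then push this refinement through the proof of Lemma~\ref{lem-vw}.

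First I would revisit the decomposition
\[
\int_{\bar\Delta}\Bigl(a_n L^{n-k} v - \int_{\bar\Delta} v\Bigr) w\,d\mu = E_1(k,n) + E_2(k,n) - E_3(k,n)
\]
from Section~\ref{sec-key}. Adding the three bounds, for $k \le n/3$ one obtains that the left hand side is dominated by $\|v\| |w|_\infty\, \hat c_{k,n}$, where
\[
\hat c_{k,n} := k(\log n)^c n^{-\tau} + k^2 n^{-1} + k\ell(n) n^{-\beta} \log n.
\]
This is exactly Lemma~\ref{lem-key} but with the qualitative null sequence $c_{k,n}$ replaced by the explicit rate $\hat c_{k,n}$. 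Note that the hypothesis $k \le n/3$ accommodates both the constraint $k \le n/3$ in Lemma~\ref{lem-moreE2} and the constraint $k \le n/2$ in Proposition~\ref{prop-E3}.

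Next I would retrace the proof of Lemma~\ref{lem-vw}, keeping the decomposition $I_1 + I_2 + I_3 - I_4 - I_5$ intact. The bounds on $I_3$, $I_4$, $I_5$ are unaffected; $I_4$ contributes the $|\gamma^{\psi_k}|_1$ term, while $I_3$ and $I_5$ contribute only $|\gamma^{\psi_\ell}|_1$ terms that do not appear in the target estimate. The bounds on $I_1$ and $I_2$ each invoke Lemma~\ref{lem-key}, and I simply substitute the quantitative bound $\hat c_{k,n}$ in place of $c_{k,n}$ at each occurrence. Since the left hand side of Lemma~\ref{lem-vw} is independent of $\ell$, taking $\ell \to \infty$ eliminates the $|\gamma^{\psi_\ell}|_1$ factors via Proposition~\ref{prop-tildev}(c), and what remains is precisely
\[
\|v\|\|w\|\{\hat c_{k,n} + |\gamma^{\psi_k}|_1\},
\]
which is the asserted bound.

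There is essentially no genuine obstacle here: the proof is a bookkeeping exercise in which the abstract null sequence $c_{k,n}$ from Lemma~\ref{lem-key} is upgraded to the explicit $\hat c_{k,n}$ and everything else in the proof of Theorem~\ref{thm-main}(a) is reused verbatim. The only points requiring mild care are confirming that the uniformity in $k,n$ (subject to $k \le n/3$) is preserved through Lemma~\ref{lem-vw}, and observing that the $\ell$-parameter, being free, can be sent to infinity so that no $|\gamma^{\psi_\ell}|_1$ term survives in the final estimate.
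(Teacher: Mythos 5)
Your proposal is correct and follows essentially the same route as the paper: both proofs re-examine the $I_1,\dots,I_5$ decomposition from the proof of Lemma~\ref{lem-vw}, replace the abstract null sequence $c_{k,n}$ coming from the $E_1,E_2,E_3$ decomposition of Lemma~\ref{lem-key} by the explicit rate assembled from Lemma~\ref{lem-moreE1}, Lemma~\ref{lem-moreE2} and Proposition~\ref{prop-E3}, and then send the free parameter $\ell\to\infty$ to eliminate the $|\gamma^{\psi_\ell}|_1$ contributions via Proposition~\ref{prop-tildev}(c). The only cosmetic difference is that you package the quantitative estimate as an upgraded Lemma~\ref{lem-key} before re-entering Lemma~\ref{lem-vw}, whereas the paper estimates the $I_j$ first and then unwinds $c_{k,n}$.
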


\begin{proof}  This combines the various estimates obtained throughout this
paper.    We claim that these combine to give the estimate
\[
\|v\|\|w\|\{k(\log n)^c n^{-\tau}+k^2n^{-1}+k\ell(n)n^{-\beta}\log n+
|\gamma^{\psi_k}|_1
+a_n|\gamma^{\psi_\ell}|_1\}.
\]
Since $\ell$ is arbitrary the result follows from Proposition~\ref{prop-tildev}(c).   

To prove the claim it is required to estimate the five terms $I_j(k,\ell,n)$, $j=1,\dots,5$, that appear in Section~\ref{sec-vw}.
By Proposition~\ref{prop-tildev}(b), $|I_3(k,\ell,n)|\ll a_n|\gamma^{\psi_\ell}|_1$, 
$|I_4(k,\ell,n)|\ll |\gamma^{\psi_k}|_1$, 
$|I_5(k,\ell,n)|\ll |\gamma^{\psi_\ell}|_1$.

The calculations in Section~\ref{sec-vw} show that
\begin{align*}
|I_1(k,\ell,n)| & 
\ll c_{k,n}, \quad
|I_2(k,\ell,n)|  \ll (c_{k,n}+|\psi_k|_1),
\end{align*}
where $c_{k,n}$ is as in Lemma~\ref{lem-key}.
Moreover, by the proof of Lemma~\ref{lem-key}, we can obtain an estimate
for $c_{k,n}$ by estimating the expressions 
$E_j(k,\ell,n)$, $j=1,2,3$.  This is done using Lemma~\ref{lem-moreE1}, Lemma~\ref{lem-moreE2} and Proposition~\ref{prop-E3} respectively.
\end{proof}

\subsection{Exponential contraction}
\label{sec-exp}

Suppose that there is exponential contraction along stable manifolds.
This corresponds to strengthening condition (P2)(i) in
Section~\ref{sec-NUH}.  
\begin{description}
\item[(E)]  
If $y'\in W^s(y)$, then $d(f^ny,f^ny')\le C\gamma_0^n$ for all $n\ge1$.
\end{description}
In this case, $\gamma^{\psi_k}$ is replaced by $\gamma^k$.

\begin{thm} \label{thm-exp}
Assume that $f:M\to M$ is a nonuniformly hyperbolic diffeomorphism satisfying
conditions (P1)--(P4) and the exponential contraction condition
(E).   
Suppose that $\varphi$ satisfies conditions~\eqref{eq-varphi}
and~\eqref{eq-smooth1}.

If the mixing rate in the noninvertible case is given by
$\|a_nT_n-P\|\ll (\log n)^c n^{-\tau}$, where $\tau\in(0,1)$, $c\ge0$, then
\[
\Bigl|a_n\int_Mv\,w\circ f^n\,d\mu-\int_M v\,d\mu \int_M  w\,d\mu\Bigr|
\ll \|v\|\|w\|\{(\log n)^{c+1} n^{-\tau}+\ell(n)n^{-\beta}(\log n)^2\},
\]
for all $v,w:Y\to\R$ H\"older, and all $n\ge0$.
\end{thm}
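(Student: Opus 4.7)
The plan is to apply Corollary~\ref{cor-rate} and optimize the auxiliary parameter $k$, using condition (E) to control the tail term $|\gamma^{\psi_k}|_1$.

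Under (E), the stable contraction $d(f^n q, f^n q') \le C\gamma_0^n$ holds uniformly in $n$, with no dependence on $\psi_n$. Re-running the argument for Proposition~\ref{prop-tildev}(b) then yields the sharpened estimate $|v \circ f^k - v_k| \le C \|v\| \gamma^k$. Tracing this through the proofs of Lemma~\ref{lem-vw} and Corollary~\ref{cor-rate}, each occurrence of $|\gamma^{\psi_k}|_1$ arises from integrating $\gamma^{\psi_k}$ against an indicator supported in a set of the form $f^{-k}(Y)$ (or similar), which has $\mu_\Delta$-measure equal to $1$ by invariance. Hence every such $|\gamma^{\psi_k}|_1$ may be replaced by $\gamma^k$.

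With this substitution, Corollary~\ref{cor-rate} becomes
\[
\Bigl|a_n\int_M v\,w\circ f^n\,d\mu-\int_M v\,d\mu\int_M w\,d\mu\Bigr|
\ll \|v\|\|w\|\{k(\log n)^c n^{-\tau} + k^2 n^{-1} + k\ell(n)n^{-\beta}\log n+\gamma^k\},
\]
valid for all $0 \le k \le n/3$. Now take $k = K\log n$ with $K$ large enough that $K|\log\gamma| \ge 1$, so that $\gamma^k \le n^{-1}$. The four terms in braces become $O((\log n)^{c+1}n^{-\tau})$, $O((\log n)^2 n^{-1})$, $O(\ell(n) n^{-\beta}(\log n)^2)$ and $O(n^{-1})$ respectively. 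Since $\tau<1$, the second and fourth are both dominated by the first (for large $n$), yielding the claimed bound.

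The main obstacle is the first step: carefully verifying that the strengthened Proposition~\ref{prop-tildev}(b) under (E) really does collapse every occurrence of $|\gamma^{\psi_k}|_1$ in Section~\ref{sec-a} to a clean $\gamma^k$, uniformly and without accruing extra factors from the (finite-measure) supports. Once that bookkeeping is done, the remaining optimization of $k$ is essentially automatic.
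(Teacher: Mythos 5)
Your proposal is correct and follows exactly the route the paper takes: under (E) the paper observes that $\gamma^{\psi_k}$ may be replaced by $\gamma^k$ (so $|\gamma^{\psi_k}|_1=\gamma^k$, since the relevant integrals are over measure-one sets such as $f_\Delta^{-k}Y$), and then takes $k$ proportional to $\log n$ with a large constant in Corollary~\ref{cor-rate}. Your optimization of $k$ and the observation that $\tau<1$ makes the $n^{-1}$ terms negligible coincide with the paper's one-line argument.
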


\begin{proof}
Take $k=q\log n$ with $q$ large in Corollary~\ref{cor-rate}
\end{proof}

The following refinement of Theorem~\ref{thm-exp} is useful for passing
results on higher order asymptotics in the noninvertible context 
over to invertible systems.  

\begin{thm} \label{thm-exp2}
Assume that $f:M\to M$ is a nonuniformly hyperbolic diffeomorphism satisfying
conditions (P1)--(P4) and the exponential contraction condition
(E).   
Suppose that $\varphi$ satisfies conditions~\eqref{eq-varphi}
and~\eqref{eq-smooth1}.

Let $b_n$ be a bounded sequence such that $b_{n+1}-b_n\ll n^{-1}$.
If the mixing rate in the noninvertible case is given by
$\|a_nT_n-b_nP\|\ll (\log n)^c n^{-\tau}$, where $\tau\in(0,1)$, $c\ge0$, 
then
\[
\Bigl|a_n\int_Mv\,w\circ f^n\,d\mu-b_n\int_M v\,d\mu \int_M  w\,d\mu\Bigr|
\ll \|v\|\|w\|\{(\log n)^{c+1} n^{-\tau}+\ell(n)n^{-\beta}(\log n)^2\},
\]
for all $v,w:Y\to\R$ H\"older, and all $n\ge0$.
\end{thm}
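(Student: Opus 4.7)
The plan is to repeat the proof of Theorem~\ref{thm-exp} with a single systematic substitution: at every step where $P$ acts as the target, replace it by $b_nP$, and verify that the resulting error terms differ from the previous ones by at most a bounded multiplicative factor together with an $O((k+j)/n)$ correction that is already absorbed by the existing bounds. The ingredients are the upgraded noninvertible hypothesis $\|a_nT_n-b_nP\|\ll (\log n)^c n^{-\tau}$, boundedness of $b_n$, and the regularity $b_{n+1}-b_n\ll n^{-1}$, which plays the same role for $b_n$ as the $q=1$ smooth-tails condition plays for $a_n$ via Proposition~\ref{prop-tails}(ii).

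The heart of the argument is the $b_n$-refinement of Lemma~\ref{lem-key}, namely
\[
\Bigl|\int_{\bar\Delta}\Bigl(a_n L^{n-k}v - b_n\int_{\bar\Delta}v\,d\bar\mu\Bigr)w\,d\bar\mu\Bigr|\le c_{k,n}\|v\||w|_\infty,
\]
with the same type of sequence $c_{k,n}$ as before. I would follow the original three-term decomposition $E_1^{(b)}+E_2^{(b)}-E_3^{(b)}$ built from the algebraic identity
\[
a_nT_{n-k-j}v - b_nPv = \tfrac{a_n}{a_{n-k-j}}\bigl(U_{n-k-j}v - b_{n-k-j}Pv\bigr) + \Bigl(\tfrac{a_n}{a_{n-k-j}}\,b_{n-k-j}-b_n\Bigr)Pv.
\]
The first bracket, collected into $E_1^{(b)}$, is controlled directly by $\|U_{n-k-j}-b_{n-k-j}P\|$, so the proof of Lemma~\ref{lem-moreE1} carries over verbatim. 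The scalar coefficient of the second bracket, absorbed into $E_2^{(b)}$, satisfies
\[
\Bigl|\tfrac{a_n}{a_{n-k-j}}b_{n-k-j}-b_n\Bigr| \le |b_{n-k-j}|\,\Bigl|\tfrac{a_n}{a_{n-k-j}}-1\Bigr| + |b_{n-k-j}-b_n| \ll \tfrac{k+j}{n-k-j},
\]
which is precisely the bound Lemma~\ref{lem-moreE2} obtained for $|a_n/a_{n-k-j}-1|$; hence that lemma applies unchanged (and in the large-$j$ range, boundedness of $b_n$ still yields a coefficient of size $O(a_n)$, recovering the estimate of Proposition~\ref{prop-E2''_tails}). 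The term $E_3^{(b)}$ simply inherits the bounded factor $b_n$ in front and is handled as in Proposition~\ref{prop-E3}.

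Granting this upgraded Lemma~\ref{lem-key}, the chain of decompositions in Lemmas~\ref{lem-v}, \ref{lem-w}, \ref{lem-vw} and in Corollary~\ref{cor-rate} carries through without further change: the only place where the $\int v\int w$ target enters is $I_1(k,\ell,n)$, and installing the factor $b_n$ there is exactly what the upgraded lemma provides, while the auxiliary terms $I_2,\dots,I_5$ depend only on the Gibbs-Markov structure and the contraction rate and require no modification. Taking $k=q\log n$ with $q$ sufficiently large, the exponential contraction assumption (E) gives $|\gamma^{\psi_k}|_1=\gamma^k=n^{-q|\log\gamma|}$, which is dominated by the stated bound $(\log n)^{c+1} n^{-\tau}+\ell(n)n^{-\beta}(\log n)^2$.

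The main obstacle is the bookkeeping in the two identities displayed above: each occurrence of $P$ has to be paired with the right $b_m$, either $b_{n-k-j}P$ so that the noninvertible rate hypothesis is applicable, or $b_nP$ so that the final answer carries the declared coefficient, and the difference between these two choices must be routed through the second bracket without any loss in exponent. The assumption $b_{n+1}-b_n\ll n^{-1}$ is tuned precisely to match the $q=1$ smooth-tails regime that already governs the $a_n$-ratios via Proposition~\ref{prop-tails}(ii), so no new error dominates the estimates that were already in place.
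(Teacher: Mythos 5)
Your proposal is correct and follows essentially the same route as the paper: upgrade Lemma~\ref{lem-key} to a $b_n$-version by splitting $a_nT_{n-k-j}v-b_nPv$ into pieces controlled respectively by the noninvertible hypothesis $\|U_m-b_mP\|\ll(\log m)^cm^{-\tau}$, the regularity of $a_n$ (Proposition~\ref{prop-tails}(ii)), and the regularity of $b_n$, then feed the upgraded lemma through the $I_j(k,\ell,n)$ decomposition and Corollary~\ref{cor-rate}, finishing with $k=q\log n$. The one point to tighten: your first bracket carries the prefactor $a_n/a_{n-k-j}$, which is not uniformly bounded in $j$ (it grows like $a_n$ for $j$ near $n-k$), so the claim that ``the proof of Lemma~\ref{lem-moreE1} carries over verbatim'' is a mild overstatement — as in the paper's treatment of $F_1'',F_1'''$, you must split the sum at $j=n/2$, using Potter/smooth-tails boundedness of $a_n/a_{n-k-j}$ on the range $j\le n/2$ and a crude bound together with $\ell(j)j^{-(\beta+1)}\ll\ell(n)n^{-(\beta+1)}$ on the range $j>n/2$; the resulting extra contribution is $\ll k\ell(n)n^{-\beta}$, which is absorbed by the stated error term, so the conclusion stands. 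The paper instead groups the cross term $(a_n/a_{n-k-j}-1)U_{n-k-j}v$ into $F_2=E_2$ unchanged, keeping the $\|U_m-b_mP\|$ piece free of the ratio factor, which is why its version of Lemma~\ref{lem-moreE1} genuinely carries over unchanged; this is only a bookkeeping difference and both routes yield the theorem.
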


\begin{proof}
In the notation of Lemma~\ref{lem-vw}, we obtain
$a_n\int_\Delta v\,w\circ f^n\,d\mu-b_n\int_\Delta v\,d\mu \int_\Delta  w\,d\mu=
J_1(k,\ell,n)+
I_2(k,\ell,n)+
I_3(k,\ell,n)-
b_nI_4(k,\ell,n)-
b_nI_5(k,\ell,n)$,
where
\begin{align*}
J_1(k,\ell,n)  & = a_n\int_\Delta v_\ell\circ f^k\,w_k\circ f^{n+\ell}\,d\mu-b_n\int_\Delta v_\ell\,d\mu\int_\Delta w_k\,d\mu \\
& 
=\int_{\bar\Delta}\Bigl(a_nL^{n-k}L^\ell \bar v_\ell-b_n \int_{\bar\Delta}L^\ell \bar v_\ell\,d\bar\mu\Bigr)\bar w_k\,d\bar\mu.
\end{align*}
Since $b_n$ is bounded, it suffices to verify that $J_1(k,\ell,n)$ is bounded by the terms listed in Corollary~\ref{cor-rate}.

As in the proof of Lemma~\ref{lem-key}, we reduce to considering three terms
 $F_j(k,n)$, $j=1,2,3$, given by $F_2(k,n)=E_2(k,n)$, $F_3(k,n)=b_nF_3(k,n)$ and
\begin{align*}
F_1(k,n)  & = 
\int_\Delta w \sum_{j=0}^{n-k}1_{Y_k}A_j1_Y\Bigl(U_{n-k-j}v-b_nPv\Bigr)\,d\mu=
F_1'(k,n)+F_1''(k,n)+F_1'''(k,n),
\end{align*}
where
\begin{align*}
F_1'(k,n) & = \int_\Delta w \sum_{j=0}^{n-k}1_{Y_k}A_j1_Y\Bigl(U_{n-k-j}v-b_{n-k-j}Pv\Bigr)\,d\mu, \\
F_1''(k,n) & = \int_\Delta w \sum_{0\le j\le n/2}1_{Y_k}A_j1_Y(b_{n-k-j}-b_n)Pv\,d\mu, \\
F_1'''(k,n) & = \int_\Delta w \sum_{n/2<j\le n-k}1_{Y_k}A_j1_Y(b_{n-k-j}-b_n)Pv\,d\mu.
\end{align*}
The argument for $E_1(k,n)$ in Lemma~\ref{lem-moreE1} goes through unchanged for $F_1'(k,n)$.
Also, by Propositions~\ref{prop-A} and~\ref{prop-tails}(i),
\[
|F_1'''(k,n)|\ll |v|_1|w|_1\sum_{n/2<j\le n-k}|1_{Y_k}A_j1_Y|_1
\ll k\mu(\varphi>n/2)
\ll k\ell(n)n^{-\beta}.
\]

So far, we used only that $b_n$ is bounded.  Finally, using Propositions~\ref{prop-A} and~\ref{prop-tails}(i) and the extra condition on $b_n$, for $k\le n/3$,
\begin{align*}
|F_1''(k,n)| & \le |v|_1|w|_1\sum_{0\le j\le n/2}|1_{Y_k}A_j1_Y|_1|b_{n-k-j}-b_n|  \\ & \ll kn^{-1}\sum_{0\le j\le n/2} \ell(j)j^{-(\beta+1)}(k+j) 
\ll k^2n^{-1}+k\ell(n)n^{-\beta}.
\end{align*}
This completes the proof.
\end{proof}

\begin{examp}
For the maps~\eqref{eq-LSV} studied in~\cite{LiveraniSaussolVaienti99}, conditions~\eqref{eq-varphi} 
and~\eqref{eq-smooth1} are satisfied
and $\ell(n)$ is asymptotically constant.
For $\beta\in(\frac12,1)$,
Melbourne and Terhesiu~\cite{MT12} obtained a mixing rate in the noninvertible case of the form 
\begin{align} \label{eq-mixnoninv}
\|T_n-\sum_{j\ge1} d_ja_n^{-j}P\|\ll c_n,
\end{align}
where $c_n=n^{-\frac12}$,
$d_1=1$ and the remaining constants
$d_j$ are typically nonzero.
For such expressions we make the convention that the sum is finite, disregarding the terms for which $a_n^{-j}\ll c_n$.

if we assume exponential contraction along stable manifolds then setting
$c=0$, $\tau=\beta-\frac12$, and $b_n=\sum_{j\ge1} d_ja_n^{-(j-1)}$ in
Theorem~\ref{thm-exp2}, we obtain
the almost identical mixing rate 
\begin{align} \label{eq-mixinv}
\Bigl|\int_Mv\,w\circ f^n\,d\mu-\sum_{j\ge1} d_ja_n^{-j}\int_M v\,d\mu \int_M  w\,d\mu\Bigr|
\ll \|v\|\|w\|c_n',
\end{align}
where $c_n'=(\log n)n^{-\frac12}$.
As in~\cite{MT12}, this is optimal for $\beta\ge \frac34$ and we obtain second order asymptotics for $\beta>\frac34$.

Using more refined methods, Terhesiu~\cite{Terhesiu-app,Terhesiusub} has obtained much stronger results for various classes of systems based on the properties of $\varphi$.   All of these results apply in particular to the maps~\eqref{eq-LSV}.
In~\cite{Terhesiu-app}, it is still assumed that $\beta>\frac12$, but with the
improved mixing rate $c_n=n^{-\beta}$ in~\eqref{eq-mixnoninv},
so for invertible systems with exponential contraction, we obtain~\eqref{eq-mixinv} with $c_n'=(\log n)n^{-\beta}$.
As in~\cite{Terhesiu-app}, this is optimal for $\beta\ge \frac23$ and we obtain second order asymptotics for $\beta>\frac23$.

Finally, Terhesiu~\cite{Terhesiusub} drops the restriction $\beta>\frac12$ and obtains mixing rates for all $\beta\in(0,1)$.   The assumptions on $\varphi$ are yet more restrictive but still include the maps~\eqref{eq-LSV} and yield 
$c_n=(\log n)n^{-1}$ in~\eqref{eq-mixnoninv}.  Accordingly,
for invertible systems with exponential contraction, we obtain $c_n'=(\log n)^2n^{-1}$ in~\eqref{eq-mixinv}.
As in~\cite{Terhesiu-app}, this is optimal for $\beta\ge \frac12$ and we obtain second order asymptotics for $\beta>\frac12$.
\end{examp}

\subsection{Subexponential contraction}

It is possible to relax hypothesis (E)
to allow uniform but subexponential decay along stable manifolds~\cite{AlvesAzevedosub}.
In the stretched exponential case $d(f^ny,f^ny')\le C\gamma_0^{n^\delta}$, our results on mixing rates and higher order asymptotics are essentially unchanged (the factor $(\log n)^{c+1}$ is changed to $(\log n)^{c+1/\delta}$).
More generally, we obtain essentially optimal mixing rates and higher order asymptotics if $d(f^ny,f^ny')\le Cn^{-q}$ with $q$ sufficiently large.

\subsection{Slow contraction}

The original assumption (P2)(i) assumes
that the contraction is as slow as the expansion. That is, expansion or
contraction occurs only at returns to the set $Y$.   This is natural for many examples, such as billiards, where there are long periods of time during which there is no contraction or expansion.

In such situations we define $\psi=1_Y$ and note that $\psi_k=\sum_{j=0}^{k-1}\psi\circ f^j$.
By the methods in~\cite{GouezelPC,MTapp2}, it can be shown that
$|\gamma^{\psi_k}|_1\ll \mu(\varphi>k)$.   Combined with smooth tails conditions, this can be used to obtain estimates on mixing rates.
The results are too poor to write down here.  (We note that a slight simplification is possible since the $k^2n^{-1}$ term in Lemma~\ref{lem-moreE2} can be removed with some extra effort.)

\section{Examples}
\label{sec-ex}

In this section, we describe some examples in which our hypotheses can be verified.
All of our results apply to Examples~\ref{ex-trivial}--\ref{ex-SW}.
In Example~\ref{Ex-E}, Theorem~\ref{thm-main} holds
but hypothesis (E) fails.

\begin{examp}
\label{ex-trivial}
We begin with a particularly elementary example.  
Define $f:X\to X$, $X=[0,1]^2$, by setting
\[
f(x_1,x_2)=\begin{cases} (x_1(1+2^\gamma x_1^\gamma)\,,\, \frac12 x_2) & 
0\le x_1<\frac12 \\
(2x_1-1\,,\,\frac12(x_2+1)) & \frac12\le x_1\le 1 \end{cases}.
\]
We identify the first component of $f$ with a map $\bar f:[0,1]\to[0,1]$
of the form in~\eqref{eq-LSV} with $\gamma\ge1$.
Recall from the introduction that $\bar f$ has a unique (up to scaling) absolutely continuous invariant measure $\bar\mu$ and this is an infinite measure.
Let $\bar Y=[\frac12,1]$ and 
define $\varphi:\bar Y\to\Z^+$, $\bar F=\bar f^{\varphi}:\bar Y\to \bar Y$ to be the first return
time and first return map.
Then $\varphi$ satisfies conditions~\eqref{eq-varphi} and~\eqref{eq-smooth1} relative to $\bar\mu$ with $\beta=1/\gamma$ and $\ell$ asymptotically constant.
Moreover $\bar F$ is a Gibbs-Markov map.

Turning to the full map $f$, we set
$Y=[\frac12,1]\times[0,1]$
and define $\varphi:Y\to\Z^+$, $F=f^\varphi:Y\to Y$ to be the first return
time and first return map.  Clearly $\varphi(y_1,y_2)=\varphi(y_1)$ and
$F$ has the form $F(y_1,y_2)=(\bar F(y_1),G(y_1,y_2))$ where 
$|\partial G/\partial y_2|_\infty\le \frac12$.

We now verify the hypotheses from Section~\ref{sec-NUH}.
Let $\{Z_j\}=\{a\times[0,1]:a\in\alpha\}$ where $\alpha$ is the partition of $\bar Y$ for the Gibbs-Markov map $\bar F$ and set $\varphi_j=\varphi|_{Z_j}$.
Define the family of stable disks $\{W^s\}=\{\{y_1\}\times[0,1]:y_1\in \bar Y\}$
and the family of unstable disks $\{W^u\}=\{\bar Y\times\{x_2\}:x_2\in [0,1]\}$.
Then (P1) and (P2)(i) are immediate with $\gamma_0=\frac12$.
Also hypotheses (P2)(ii) and (P2)(iii) are standard for $\bar f$ and $\bar F$, and 
are clearly inherited by $f$ and $F$.
Hypotheses (P3) and (P4) are immediate.
Moreover, hypothesis (E) holds (also with $\gamma_0=\frac12$). 
\end{examp}

\begin{examp}
\label{ex-LT}
More generally, we consider invertible maps $f:X\to X$ of the form
$f(x_1,x_2)=(\bar f(x_1),g(x_1,x_2))$ with $\bar f$, $\bar Y$, $\varphi$ and $\bar F$
as before, and it is assumed that $g:X\to[0,1]$ is $C^2$.  Then $\varphi$ and $Y$ are also unchanged and $F=f^\varphi:Y\to Y$ has the form
$F(y_1,y_2)=(\bar F(y_1),G(y_1,y_2))$.  We suppose that $|\partial G/\partial y_2|_\infty\le\gamma_0$
where $\gamma_0\in(0,1)$.
The stable disks $\{W^s\}$ and the partition $\{Z_j\}$ are unchanged from
Example~\ref{ex-trivial}, and the existence of unstable disks $\{W^u\}$ is a standard consequence of the uniform hyperbolicity of $F|_{Z_j}$ for each $j$.
Hypotheses (P1)--(P4) follow.
If we assume further that $|\partial g/\partial x_2|_\infty\le\gamma_0$
then hypothesis (E) holds.
\end{examp}

\begin{examp} \label{ex-SW}
A rather different set of examples can be constructed along the lines of the Smale-Williams solenoids~\cite{Smale67,Williams67}.  First modify the map $\bar f$ near $\frac12$ so that
$\bar f$ is $C^2$ on $(0,1)$.  Let $X=[0,1]\times \D$ where
$\D$ is the closed unit disk in $\R^{n-1}$ and define
$f_0(x_1,x_2)=(\bar f(x_1),g(x_2))$ for $(x_1,x_2)\in [0,1]\times\D$
where $g:\D\to\D$ is $C^2$ and $|Dg|_\infty<1$.
Now perturb to obtain a $C^2$ embedding $f:X\to X$ such that $f=f_0$ on $[0,\frac12]\times\D$.  
Note that $f$ has a unique neutral fixed point inside
$\{0\}\times\D$.

Let $Y=[\frac12,1]\times\D$ and define the first return time
$\varphi:Y\to\Z^+$ and first return map $F=f^\varphi:Y\to Y$.
The combinatorics in this example are almost identical to those in the previous ones, the only difference being in the first iterate of $f$, so it is easy to check the conditions~\eqref{eq-varphi} and~\eqref{eq-smooth1}.  Also, $F$ is uniformly hyperbolic if $f$ is close enough to $f_0$ and conditions (P1)--(P3) as well
as (E) are easily verified.  
Moreover, (P4) is satisfied provided the perturbation $f$ is chosen so that
$\bar F$ is Markov.
\end{examp}

\begin{examp} \label{Ex-E}
Finally, we mention a class of examples for which hypothesis (E) fails.
Proceed exactly as in Example~\ref{ex-SW} except that $g:\D\to\D$ is replaced 
by a $C^2$ map $g:X\to\D$ with derivative $Jg=D_{x_2}g$ in the $\D$ directions
such that
$|Jg|_\infty\le1$, $|Jg|_{[\frac12,1]\times\D}|_\infty < 1$,
but $Jg(0,x_2)\equiv I$.
This means that the invariant set $\{0\}\times\D$ is neutral in all directions.  However, writing $F=(F_1,F_2)$, we have that
 $|D_{x_2}F_2|_\infty\le |Jg|_{[\frac12,1]\times\D}|_\infty <1$ and hypotheses (P1)--(P4) hold as before.
\end{examp}

\begin{rmk} \label{rmk-LT}
Independently, using different techniques the bypass the quotienting step altogether, Liverani and Terhesiu~\cite{LiveraniTerhesiusub} have obtained similar results to the ones presented in this paper.
They work with distributional function spaces of the type pioneered by Liverani and coworkers starting with~\cite{BlankKellerLiverani}.

The class of examples in Example~\ref{ex-LT} is essentially the same as those treated by Liverani and Terhesiu~\cite[Example~1]{LiveraniTerhesiusub}.
Currently the function spaces 
in~\cite{LiveraniTerhesiusub} are restricted to situations where there
is a global smooth stable foliation.  In contrast, the method presented in this paper is not dependent on the smoothness of the stable foliation.  In particular, our methods apply to sufficiently small smooth perturbations, preserving the Markov structure, of the maps
in Example~\ref{ex-LT}, and their higher dimensional analogues.  For such maps, generally the stable foliations have only H\"older regularity (even though the individual leaves are smooth).

Similarly, the methods in~\cite{LiveraniTerhesiusub} currently do not   generally apply to Examples~\ref{ex-SW}
and~\ref{Ex-E}.  On the other hand, the non-Markov examples of
\cite[Example~2]{LiveraniTerhesiusub} are not covered by the method
presented in this paper.

It will be interesting to see whether the
the methods in~\cite{LiveraniTerhesiusub} can be developed to prove
mixing rates and higher order asymptotics in situations such as in
 Example~\ref{Ex-E} when there is not a global smooth stable foliation.
In many important situations such as billiards, hyperbolicity breaks down due to long periods where there is no contraction or expansion, so that (P1)--(P4) hold but not (E).  
Such examples are studied for instance in~\cite{ArbietoMarkarianPacificoaSoares12}.
\end{rmk}

\paragraph{Acknowledgements}
This research was supported in part by the CNRS during a three month appointment as chercheur en math\'ematique at CIRM, Marseille, Autumn 2011 where this research was begun.
The research was continued at the University of Surrey and completed at the University of Warwick, partially supported by
the European Advanced Grant {\em StochExtHomog} (ERC AdG 320977).
I am very grateful to Dalia Terhesiu for numerous helpful and encouraging discussions at various stages of the work.

\end{document}